\documentclass[12pt,reqno]{amsart}
\usepackage{amssymb, amsfonts, amsbsy, latexsym, epsfig, color}

\textwidth  6.5in
\textheight 8.5in

\topmargin0cm
\oddsidemargin0cm \evensidemargin0cm

\newtheorem{Thm}{Theorem}[section]

\newtheorem{corollary}[Thm]{Corollary}
\newtheorem{lemma}[Thm]{Lemma}

\newtheorem{proposition}[Thm]{Proposition}
\newtheorem{definition}[Thm]{Definition}
\newtheorem{remark}[Thm]{Remark}

\newtheorem{example}[Thm]{Example}
\newtheorem{theorem}[Thm]{Theorem}

\newcommand{\bitem}{\begin{itemize}}
\newcommand{\eitem}{\end{itemize}}
\newcommand{\ip}[2]{\langle#1,#2\rangle}

\newcommand{\spann}{\mbox{\rm span}}

\def\ldots{\mathinner{\ldotp\ldotp\ldotp}}
\def\cdots{\mathinner{\cdotp\cdotp\cdotp}}

\def \tr{\text{tr }}

\def \cE{\mathcal{E}}
\def \cK{\mathcal{K}}
\def \cV{\mathcal{V}}
\def \cW{\mathcal{W}}

\def \spann{\text{span}}
\def \beq{\begin{equation}}
\def \eeq{\end{equation}}


\newcommand{\bR}{{\mathbb R}}
\newcommand{\NN}{{\mathbb N}}

\newcommand{\RR}{{\mathbb R}}

\def \0{\mathbf{0}}
\def \tr{\textnormal{tr}}

\def\cH{\mathcal{H}}

\newcommand{\ap}[1]{{\color{black}{#1}}}


\begin{document}

\title{Fusion Frames: Existence and Construction}

\author[R. Calderbank]{Robert Calderbank}
\address{Program in Applied and Computational Mathematics, Princeton University, Princeton, NJ 08544-1000, USA}
\email{calderbk@math.princeton.edu}

\author[P. G. Casazza]{Peter G. Casazza}
\address{Department of Mathematics, University of Missouri, Columbia, MO 65211-4100, USA}
\email{pete@math.missouri.edu}

\author[A. Heinecke]{Andreas Heinecke}
\address{Department of Mathematics, University of Missouri, Columbia, MO 65211-4100, USA}
\email{ah343@mizzou.edu}

\author[G. Kutyniok]{Gitta Kutyniok}
\address{Institute of Mathematics, University of Osnabr\"uck, 49069 Osnabr\"uck, Germany}
\email{kutyniok@math.uni-osnabrueck.de}

\author[A. Pezeshki]{Ali Pezeshki}
\address{Electrical and Computer Engineering, Colorado State University, Fort Collins, CO 80523-1373, USA}
\email{pezeshki@engr.colostate.edu}

\thanks{P.G.C. was supported by NSF DMS 0704216. G.K. would like to thank the
Department of Statistics at Stanford University and the Mathematics
Department at Yale University for their hospitality and support
during her visits. She was supported by Deutsche
Forschungsgemeinschaft (DFG) Heisenberg-Fellowship KU 1446/8-1. R.C.
and A.P. were supported in part by the NSF under Grant 0701226, by
the ONR under Grant N00173-06-1-G006, and by the AFOSR under Grant
FA9550-05-1-0443. The authors would like to thank the American
Institute of Mathematics in Palo Alto, CA, for sponsoring the
workshop on ``Frames for the finite world: Sampling, coding and
quantization'' in August 2008, which provided an opportunity for the
authors to complete a major part of this work.}

\begin{abstract}
Fusion frame \ap{theory is an \ap{emerging} mathematical theory that
provides a natural framework for performing} hierarchical data
processing. A {\em fusion frame} \ap{is} a frame-like collection of
subspaces in a Hilbert space, thereby generalizing the concept of
\ap{a frame for signal representation. In} this paper, we \ap{study}
the existence and construction of fusion frames. We first present a
complete characterization of \ap{a special class of fusion frames,
called \textit{Parseval} fusion frames. The value of Parseval fusion
frames is that the inverse fusion frame operator is equal to the
identity and therefore signal reconstruction can be performed
with minimal complexity.} We \ap{then} introduce
\ap{two general methods} -- the {\em spatial complement} and
the {\em Naimark complement} -- for constructing \ap{a new fusion
frame from a given fusion frame}. We \ap{then establish existence
conditions for fusion frames with desired properties. In particular, we address the
following question: Given} $M, N, m \in \NN$ and
$\{\lambda_j\}_{j=1}^M$\ap{, does} there \ap{exist} a fusion frame
in $\RR^M$ with $N$ subspaces of dimension $m$ for which
$\{\lambda_j\}_{j=1}^M$ are the eigenvalues of the associated fusion
frame operator\ap{?} We \ap{address} this problem by providing
an algorithm which computes such a fusion frame for almost \ap{any}
collection of \ap{parameters $M, N, m \in \NN$ and
$\{\lambda_j\}_{j=1}^M$.} Moreover, we \ap{show} how this procedure
can be applied, if subspaces \ap{are to} be added to a given fusion
frame to force it to become \ap{Parseval}.
\end{abstract}



\maketitle

\section{Introduction}

\subsection{\ap{Fusion Frames}}

Recent advances in hardware \ap{technology have} enabled the
\ap{economic production and deployment} of sensing and computing
networks consisting of a large number of low-cost components, which
through collaboration enable reliable and efficient operation.
\ap{Across different disciplines there is a fundamental shift from
centralized information processing to distributed or network-wide
information processing. Data communication is shifting from
point-to-point communication to packet transport over wide area
networks where network management is distributed and the reliability
of individual links is less critical. Radar imaging is moving away
from single platforms to multiple platforms that cooperate to
achieve better performance. Wireless sensor networks are emerging as
a new technology with the potential to enable cost-effective and
reliable surveillance.} These applications typically involve a large
number of data streams, which need to be integrated at a central
processor. Low communication bandwidth and limited
transmit/computing power at each single node in the network give
rise to the need for decentralized data analysis, where data
reduction/processing is performed in two steps: local processing at
neighboring nodes followed by the integration of locally
processed data streams at a central processor.

{\em Fusion frames} \ap{(or {\em frames of subspaces}) \cite{CKL08}
are a recent development} that \ap{provide} a natural mathematical
framework for \ap{two-stage (or, more generally, hierarchical) data
processing. The notion of a fusion frame} was introduced in
\cite{CKL08} with the main ideas already contained in \cite{CK04}. A
fusion frame \ap{is} a frame-like collection of subspaces in a
Hilbert space. In frame theory, a signal is represented by a
collection of {\em scalars}, which measure the amplitudes of the
projections of the signal onto the frame vectors, whereas in fusion
frame theory the signal is represented by the projections of the
signal onto the fusion frame subspaces. In a two-stage data
processing setup, these projections serve as locally processed data,
which can be combined to reconstruct the signal of interest.

Given a Hilbert space $\cH$ and a family of closed subspaces $\{\cW_i\}_{i \in I}$ with
associated positive weights $v_i$, $i \in I$, a {\em fusion frame} for $\cH$ is a collection
of weighted subspaces $\{(\mathcal{W}_i,v_i)\}_{i \in I}$ such that there exist
constants $0 < A \le B < \infty$ satisfying
\[
A\|f\|^2 \le \sum_{i \in I} v_i^2 \|P_i f\|^2 \le B\|f\|^2 \qquad \mbox{for any }f \in \cH,
\]
where $P_i$ is the orthogonal projection onto $\mathcal{W}_i$. The constants $A$ and $B$ are called
{\em fusion frame bounds}. A fusion frame is called {\em tight}, if $A$ and $B$ can be chosen
to be equal, and {\em Parseval} if $A=B=1$. If $v_i = 1$ for all $i \in I$, for the sake of brevity,
we sometimes write $\{\mathcal{W}_i\}_{i \in I}$ instead of $\{(\mathcal{W}_i,1)\}_{i \in I}$.

\ap{Any} signal $f \in \cH$ can be \ap{reconstructed \cite{CKL08}} from its fusion frame
measurements $\{v_i P_i f\}_{i \in I}$ by performing
\begin{equation} \label{eq:FFreconstr}
f = \sum_{i \in I} v_i S^{-1} (v_i P_i f),
\end{equation}
where $S = \sum_{i \in I} v_i P_i f$ is the {\em fusion frame operator} known to be positive and
self-adjoint.

\begin{remark}
\ap{If we wish to perform} dimension reduction, we can regard $\{v_i U_i^* f\}_{i \in I}$
as fusion frame measurements (cf. \cite{KPCL08}), where $U_i$ is a left-orthogonal basis for $\cW_i$,
i.e., $P_i = U_iU_i^*$ and $U_i^*U_i=I$. In this case, the reconstruction formula takes the form
\begin{equation} \label{eq:FFreconstrU}
f = \sum_{i \in I} v_i S^{-1} U_i (v_i U_i^* f).
\end{equation}
\end{remark}

\begin{remark}
\ap{Reconstruction of a sparse signal from its fusion frame measurements is considered in \cite{BKR08}.}
\end{remark}

\subsection{\ap{Applications of Fusion Frames}}

\ap{Frame theory has been established as a powerful mathematical framework for robust and stable representation of signals. It has found numerous applications in sampling theory \cite{E03}, data quantization \cite{BP07}, quantum measurements \cite{EF02}, coding \cite{BDV00,SH03}, image processing \cite{CD02,CLHB05}, wireless communications \cite{HBP01,HP02,S01}, time-frequency analysis \cite{DH02,DHRS03,WES05}, speech recognition \cite{BCE06}, and bioimaging \cite{CK08}. The reader is referred to survey papers \cite{KC07a,KC07b} and the references therein for more examples. Fusion frame theory is a generalization of frame theory that is more suited for applications where two-stage (local and global) signal/data analysis is required. To highlight this we give three signal processing applications wherein fusion frames arise naturally. We also discuss the connection between fusion frames and two pressing questions in pure mathematics.}

\ap{\textit{Distributed Sensing.} Consider} a large number of small and inexpensive sensors \ap{that} are deployed in \ap{an area} of interest to measure various physical quantities or to keep \ap{the area} under surveillance. Due to practical and economical factors, such as low communication bandwidth, limited signal processing power, limited battery life, or the topography of the surveillance area, the sensors are typically deployed in clusters, where each cluster includes a unit with higher computational and transmission power for local data processing. A typical large sensor network can thus be viewed as a redundant collection of subnetworks forming a set of subspaces (e.g. see \cite{CKLR07,KPCL08,PKC08}). The local subspace information are passed to a central processing station for joint processing. A similar local-global signal processing principle is applicable to modeling of human visual cortex as discussed in \cite{RJ06}.

\ap{\textit{Parallel Processing.}} If a frame system is simply too
large to handle effectively (from either computational complexity or
numerical stability standpoints), we can divide it into multiple
small subsystems for simple and perhaps parallelizable processing.
Fusion frames provide a natural framework for splitting a large
frame system into smaller subsystems and then recombining the
subsystems. Splitting of a large frame system into smaller
subsystems for parallel processing was first considered in
\cite{BM91} and predates the introduction of fusion frames.

\ap{\textit{Packet Encoding.}} Information bearing symbols are
typically encoded into a number of packets and then transmitted over
a communication network, e.g., the internet. The transmitted packet
may be corrupted during the transmission or completely lost due to
buffer overflows. By introducing redundancy in encoding the symbols,
we can increase the reliability of the communication scheme. Fusion
frames, as redundant collections of subspaces, can be used to
produce a redundant representation of a source symbol. In the
simplest form, each fusion frame projection can be viewed as a
packet that carries some new information about the symbol. The
packets can be decoded jointly at the destination to recover the
transmitted symbol. The use of fusion frames for packet encoding is
considered in \cite{Bod07}.

\ap{\textit{The Kadison-Singer Problem and Optimal Packings.}}
The Kadison-Singer Problem \cite{CT06} has been among the most
famous unsolved problems in analysis
 since 1959. \ap{It turns out that this problem is, roughly speaking,
 equivalent to the following question (cf. \cite{CT06}). Can a frame be
 partitioned such that the spans of
the partitions as a fusion frame lead to a `good' lower fusion frame
bound? The reader is referred to \cite{CT06} for details.} Therefore,
advances in the design of fusion frames will have direct impact in
providing new angles for a renewed attack to the Kadison-Singer
Problem. \ap{In addition, there is a close connection between
Parseval fusion frames and Grassmannian packings. In fact, as shown
in \cite{KPCL08},} Parseval fusion frames consisting of
equi-distance and equi-dimensional subspaces are optimal
Grassmannian packings. \ap{Therefore, new methods for constructing
such fusion frames also provide ways to construct optimal packings.
We note that the frame counterpart of this connection also exists (cf. \cite{SH03}).}

\subsection{\ap{Main Contribution: Construction of Fusion Frames with Desired Properties}}

\ap{The value of fusion frames for signal processing is that the
interplay between local-global processing and redundant
representation provides resilience to noise and erasures due to, for instance, sensor
failures or buffer overflows \cite{Bod07,CK07b,KPCL08,PKC08}. It also provides robustness to
subspace perturbations \cite{CKL08}, which may be due to imprecise
knowledge of sensor network topology. In most cases, extra
structure on fusion frames is requited to guarantee satisfactory
performance. For instance, our recent work \cite{KPCL08,PKC08} shows
that in order to minimize the mean-squared error in the linear
minimum mean-squared error estimation of a random vector from its
fusion frame measurements in white noise the
fusion frame needs to be Parseval or tight. The Parseval property is
also desirable for managing signal processing complexity.  It means
that the fusion frame operator $S$ is equal to the identity operator
and hence the operator inversion required for signal \ap{reconstruction is} trivial.
To provide maximal robustness against erasures of one fusion frame
subspace the fusion frame subspaces must also be equi-dimensional.
If maximal robustness with respect to two or more subspace erasures
is desired then the fusion frame subspaces must all have the same
pairwise chordal distance as well. Other examples of optimality of
\textit{structured} fusion frames for signal reconstruction can be
found in \cite{Bod07,CK07b,KPCL08,PKC08,CKL08}.}

\begin{remark}
We note that signal reconstruction in a frame system in the presence of erasures has been studied by several authors. The results indicate that robustness to erasures of frame coefficients also require the frame system to have specific properties and structure, such as Parseval, equiangular, or equal-norm property. The reader is referred to \cite{BOG08,BP05,CK03,GKK01,HP04,STDH07,TDHS05} and the references therein for a collection of relevant results.
\end{remark}

\ap{A natural question is: How can one construct fusion frames with
desired properties? More specifically, how can one construct fusion
frames for which a set of parameters such as}
\bitem
\item[1)] {\em eigenvalues of the fusion frame operator,}
\item[2)] {\em dimensions of the subspaces,}
\item[3)] {\em chordal distances between subspaces, \ap{and/or}}
\item[4)] {\em weights assigned to the subspaces}
\eitem
\ap{can be prescribed?}

\ap{In this paper, we present a complete answer to the above
question under the first design criterion and provide partial
answers for the construction of fusion frames under the second and
third design criteria. Our main contributions are as follow.

\bitem
\item In Section \ref{sec:characterization}, we provide a complete
characterization of \textit{Parseval} fusion frames in terms of the
existence of special isometries defined on an encompassing Hilbert
space.\vspace{0.1cm}
\item In Section \ref{sec:generalconstruction}, we present two general
ways for constructing a new fusion frame from a given fusion frame,
by exploiting the notions of spatial complement and Naimark
complement, and establish the relationship between the parameters of
the two fusion frames. In particular, we show how the weights,
subspace dimensions, fusion frame bounds, eigenvalues of the fusion
frame operator, and the chordal distance between the subspaces for
the new fusion frame can be determined from those of the original
fusion frame prior to construction.\vspace{0.1cm}
\item In Section \ref{sec:fusionframeoperator}, we establish existence
conditions and develop simple algorithms for constructing fusion
frames with \ap{desired} fusion frame operators.\footnote{\ap{Throughout this paper whenever we say a fusion frame with a desired fusion frame operator  we mean a fusion frame for which the fusion frame operator has a desired set of eigenvalues. A similar language is used to refer to a frame for which the frame operator has a desired set of eigenvalues.}} Our construction
produces frames with \ap{desired} frame operators as a special case.
\eitem }

\ap{We note that the construction of frames with arbitrary frame operators has been studied by several authors} (see, e.g.,  \cite{BF03,Cas04,CL06,CK07a}). However, the fusion frame counterparts are much less exploited. In fact, even establishing existence conditions for fusion frames is a deep and involved problem.  Frame potentials, introduced in (cf. \cite{BF03}), have proven to be a valuable tool
in asserting the existence of tight frames.  Two recent papers \cite{CF09,MRS08} have
introduced and studied {\em fusion frame potentials} to address the existence of fusion
frames, but with limited success.  The problem here is that minimizers of the fusion frame
potential are not necessarily tight fusion frames.  Also, the fusion frame potential is a very
complex notion and it requires some deep ideas to make it work.
  However, until recently, no general construction method was known for the construction of fusion frames \ap{with desired properties.  A significant advance for the construction of equi-dimensional
tight fusion frames
  was presented in \cite{CFMWZ09}. The authors have provided a complete characterization of triples $(M,N,m)$ for which tight fusion frames exist. Here $M$ is the total dimension of the Hilbert space, $N$ is the number of subspaces, and $m$ is the dimension of the fusion frame subspaces. They have also developed an elegant and simple algorithm which can produce a tight fusion frame for most $(M,N,m)$ triples.}

\ap{Our paper is concerned with a more general question than that answered in \cite{CFMWZ09}, that is, the construction of a fusion
frame (not necessarily tight) for which the fusion frame operator can possess any desired set of eigenvalues. This includes fusion
frames with desired bounds as a special case, as the fusion frame bounds are simply the smallest and largest eigenvalues of the
associated fusion frame operator. More specifically, given $M, N, m \in \NN$, and a set of real positive values $\{\lambda_j\}_{j=1}^M$,
we establish existence conditions for fusion frames whose fusion frame operators have eigenvalues $\{\lambda_j\}_{j=1}^M$ and
develop a simple algorithm that produces such a fusion frame. The answer to this problem has profound practical and theoretical implications. From a signal analysis standpoint, it \ap{provides a flexible} mathematical framework where the representation system can be tailored to satisfy data processing demands. From a theoretical standpoint,} it provides a deep understanding of the boundaries of fusion frame theory viewed as a generalization of frame theory.

\ap{We note that our solution provides an answer to the construction of frames with arbitrary frame operators as a special case, which has been an open problem until now. Construction of frames with arbitrary frame operators  is studied in \cite{CL02}
(See also \cite{Cas04})}

\section{Characterization of Parseval Fusion Frames}\label{sec:characterization}

In this section, we provide a characterization of \textit{Parseval}
fusion frames in terms of the existence of special isometries
defined on an encompassing Hilbert space. This characterization may
be viewed as the fusion frame counterpart to {\em Naimark's theorem}
\cite{Cas00,CHL00,Chr03,HL00}, where Parseval frames are characterized as frame
systems generated by an orthogonal projection of an orthonormal
basis from a larger Hilbert space. \ap{However, these
characterizations cannot be easily exploited for constructing
Parseval frames or Parseval fusion frames. The} difficulty arises
from the uncontrollable nature of the projection of the larger
Hilbert space. For fusion frames, the construction of appropriate
isometries are \ap{particularly} difficult.  In fact, these problems are
equivalent to serious unsolved problems in operator theory concerning
the construction of projections which sum to a given operator.
Nonetheless, these isometries are illuminating for understanding
Parseval fusion frames.

The following theorem states the main result of this section, which
can be regarded as a quantitative version of \cite[Thm. 3.1]{CKL08}.

\begin{theorem}
\label{theo:charac_PFF_intermsof_isom} For a complete family of
subspaces\footnote{A family of subspaces is called {\em complete} in
$\cH$, if their span equals $\cH$.} $\{\cW_i\}_{i \in I}$ of $\cH$
and positive weights $\{v_i\}_{i \in I}$, the following conditions
are equivalent. \bitem
\item[(i)] $\{(\cW_i,v_i)\}_{i\in I}$ is a Parseval fusion frame for $\cH$.
\item[(ii)] There exists a Hilbert space $\cK \supset \cH$, an orthonormal basis
$\{e_j\}_{j\in J}$ for $\cK$, a partition $\{J_i\}_{i\in I}$ of $J$, and isometries
$L_i : \cE_i := \spann\{e_j\}_{j\in J_i} \to \cW_i$, $i \in I$, such that
\[
P = \sum_{i\in I} v_i L_i
\]
is an orthogonal projection of $\cK$ onto $\cH$.
\eitem
\end{theorem}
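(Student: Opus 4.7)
The plan is to prove the two implications separately by explicit construction. For (i) $\Rightarrow$ (ii), the Parseval hypothesis will let me embed $\cH$ isometrically into the external Hilbert direct sum $\bigoplus_i \cW_i$ via an analysis-type operator, and this furnishes both the enveloping space $\cK$ and the isometries $L_i$. For (ii) $\Rightarrow$ (i), the self-adjointness of $P$ will yield a reconstruction of $f \in \cH$ whose summands lie in the mutually orthogonal subspaces $\cE_i$, whereupon the Parseval identity follows by a Pythagorean calculation.

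\textbf{(i) $\Rightarrow$ (ii).} Set $\cK := \bigoplus_{i\in I}\cW_i$ and define $T:\cH\to\cK$ by $Tf = (v_i P_i f)_{i\in I}$. The adjoint is $T^\ast(x_i)_i = \sum_i v_i x_i$, so $T^\ast T = \sum_i v_i^2 P_i = S = I_\cH$ by the Parseval assumption, making $T$ an isometric embedding; identify $\cH$ with $T(\cH)\subset \cK$. Concatenating ONBs $\{u_{i,k}\}_{k\in J_i}$ of each $\cW_i$ yields an ONB $\{e_j\}_{j\in J}$ of $\cK$ with partition $\{J_i\}_{i\in I}$, and $\cE_i := \spann\{e_j : j \in J_i\}$ is exactly the $i$-th summand. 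Define $L_i:\cE_i \to \cW_i$ (with $\cW_i$ identified with $T(\cW_i)\subset \cK$) as the canonical identification $\cE_i \cong \cW_i$ followed by $T|_{\cW_i}$; each $L_i$ is a surjective isometry. A direct verification gives $\sum_i v_i L_i Q_i = TT^\ast$, where $Q_i$ is the projection onto $\cE_i$, and $TT^\ast$ is the orthogonal projection of $\cK$ onto $T(\cH) = \cH$ since $T$ is an isometry.

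\textbf{(ii) $\Rightarrow$ (i).} The driver is $P = P^\ast$. Taking adjoints termwise in $P = \sum_i v_i L_i Q_i$ gives $(L_i Q_i)^\ast = L_i^\ast P_i P_\cH$, where $L_i^\ast : \cW_i \to \cE_i$ is the adjoint of $L_i$ and $P_\cH$ the projection $\cK \to \cH$. Applying $P^\ast y = \sum_i v_i L_i^\ast P_i P_\cH y$ to $f \in \cH$ and using $Pf = P^\ast f = f$ together with $P_\cH f = f$ yields the key identity
\[
f \;=\; \sum_{i\in I} v_i\, L_i^\ast P_i f \qquad \text{in } \cK.
\]
Since $\{J_i\}$ partitions the ONB, the subspaces $\cE_i$ are mutually orthogonal in $\cK$, so the Pythagorean theorem delivers $\|f\|^2 = \sum_i v_i^2 \|L_i^\ast P_i f\|^2$. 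Because each $L_i$ is a surjective isometry onto $\cW_i$, we have $L_i L_i^\ast = I_{\cW_i}$, so $\|L_i^\ast P_i f\| = \|P_i f\|$ and the Parseval identity $\|f\|^2 = \sum_i v_i^2 \|P_i f\|^2$ follows.

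\textbf{Main obstacle.} The delicate step is the adjoint bookkeeping in (ii) $\Rightarrow$ (i), with four spaces ($\cK$, $\cH$, $\cE_i$, $\cW_i$) and three projections in play. The key geometric insight that unlocks the argument is that the reconstruction $f = \sum_i v_i L_i^\ast P_i f$ is in fact the orthogonal decomposition of $f \in \cH \subset \cK$ along $\cK = \bigoplus_i \cE_i$, i.e.\ $Q_i f = v_i L_i^\ast P_i f$; once this is recognized, the norm computation is immediate. A secondary subtlety, which one must address, is the surjectivity of each $L_i$: without it, one would only recover a Parseval property for the image subspaces $L_i(\cE_i)\subseteq \cW_i$, and the requirement $\mathrm{range}(P)=\cH$ combined with the completeness of $\{\cW_i\}$ is what rules this out.
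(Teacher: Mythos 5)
Your argument is correct in substance but follows a genuinely different route from the paper's. For (i) $\Rightarrow$ (ii) the paper first passes to the Parseval frame $\{v_i e_{ij}\}$ of local orthonormal bases (citing \cite[Thm.~2.3]{CKL08}) and then invokes Naimark's theorem as a black box to obtain $\cK$, the projection $P$, and finally $L_i := \frac{1}{v_i}P|_{\cE_i}$; you instead build $\cK = \bigoplus_i \cW_i$ explicitly via the analysis operator $T$ and use $TT^\ast$, which amounts to re-proving the Naimark dilation in this setting and makes the construction self-contained. For (ii) $\Rightarrow$ (i) the paper observes that $\{Pe_j\}_{j\in J}$ is a Parseval frame and that $\{\frac{1}{v_i}Pe_j\}_{j\in J_i}$ is an orthonormal basis of $\cW_i$, then sums $|\langle f, Pe_j\rangle|^2$; your adjoint computation $f=\sum_i v_i L_i^\ast P_i f$ followed by Pythagoras along $\cK=\bigoplus_i\cE_i$ is an equivalent but basis-free version of the same identity. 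Both directions check out, modulo the point below.

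The one genuine flaw is your closing claim that non-surjectivity of the $L_i$ is ``ruled out'' by $\mathrm{range}(P)=\cH$ together with completeness of $\{\cW_i\}_{i\in I}$. That implication is false. Take $\cK=\cH=\RR^2$ with orthonormal basis $e_1,e_2$, let $\cW_1=\RR^2$, $\cW_2=\spann\{e_2\}$, $v_1=v_2=1$, $\cE_1=\spann\{e_1\}$, $\cE_2=\spann\{e_2\}$, and let $L_1e_1=e_1$, $L_2e_2=e_2$. Each $L_i$ is an isometry into $\cW_i$, the family $\{\cW_1,\cW_2\}$ is complete, and $P=L_1Q_1+L_2Q_2=I$ is the orthogonal projection of $\cK$ onto $\cH$; yet $P_1+P_2=I+e_2e_2^{T}\neq I$, so $\{(\cW_i,1)\}$ is not Parseval. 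What your argument (and the paper's) actually needs is that each $L_i$ maps $\cE_i$ \emph{onto} $\cW_i$, and this must be read as part of hypothesis (ii) --- i.e.\ ``isometry $L_i:\cE_i\to\cW_i$'' means unitary onto $\cW_i$. The paper's own proof makes the same tacit assumption when it asserts that $\{\frac{1}{v_i}Pe_j\}_{j\in J_i}$ is an orthonormal \emph{basis} for $\cW_i$ rather than merely an orthonormal system. So your instinct to flag surjectivity was right, but the proposed derivation of it from the other hypotheses does not work; once surjectivity is taken as given, your proof is complete.
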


\begin{proof}
(i) $\Rightarrow$ (ii). For every $i\in I$, let $\{e_{ij}\}_{j\in
J_i}$ be an orthonormal basis for $\cW_i$. Since
$\{(\cW_i,v_i)\}_{i\in I}$ is a Parseval fusion frame for $\cH$, by
\cite[Thm. 2.3]{CKL08}, the family $\{v_i e_{ij}\}_{i\in I, j\in
J_i}$ is a Parseval frame for $\cH$. This implies (cf. \cite{Cas00,Chr03,HL00})
that there exists a Hilbert space $\cK \supset \cH$ with an
orthonormal basis $\{\tilde{e}_{ij}\}_{i\in I,j\in J_i}$ so that the
orthogonal projection $P$ of $\cK$ onto $\cH$ satisfies
\[
P(\tilde{e}_{ij}) = v_i e_{ij}, \qquad i \in I,\: j \in J_i.
\]
Setting $\cE_i = \spann  \{\tilde{e}_{ij}\}_{j\in J_i}$, the map
\[
L_i := \frac{1}{v_i}P|_{\cE_i}:\cE_i \to \cW_i
\]
is an isometry for all $i\in I$, and
\[
P = \sum_{i\in I} v_i L_i
\]
is an orthogonal projection of $\cK$ onto $\cH$.

(ii) $\Rightarrow$ (i). Since $P=\sum_{i\in I} v_i L_i$ is an
orthogonal projection of $\cK$ onto $\cH$, $\{Pe_j\}_{j\in J}$ is a
Parseval frame for $\cH$. Further, since $L_i := 1/v_i \cdot
P|_{\cE_i}:\cE_i \to \cW_i$ is an isometry, it follows that $\{1/v_i
\cdot Pe_j\}_{j\in J_i}$ is an orthonormal basis for $\cW_i$, $i \in
I$. Applying these observations and denoting by $P_i$ the orthogonal
projection onto $\cW_i$, for all $f\in \cH$, we have
\begin{eqnarray*}
\sum_{i\in I} v_i^2 \|P_if\|^2
&=& \sum_{i\in I} v_i^2 \Big\|\sum_{j\in J_i}\Big\langle f,\frac{1}{v_i}Pe_j\Big\rangle
\frac{1}{v_i}Pe_j\Big\|^2\\
&=& \sum_{i\in I}v_i^2 \sum_{j\in J_i}|\Big\langle f, \frac{1}{v_i}Pe_j\Big\rangle |^2\\
&=& \sum_{i\in I}\sum_{j\in J_i}|\langle f,Pe_j \rangle |^2\\
&=& \|f\|^2.
\end{eqnarray*}
Thus $\{(\cW_i,v_i)\}_{i\in I}$ is a Parseval fusion frame as claimed.
\end{proof}

Considering this theorem and its proof, we can derive an
interesting corollary which links the construction of Parseval
fusion frames to the construction of special Parseval frames. In
fact, the question of existence of Parseval fusion frames is
equivalent to the question of existence of Parseval frames for which
certain subsets of \ap{frame vectors} are orthonormal. The answer to
\ap{this question} is not known, but the connection between the two
problems may provide insights into the construction Parseval fusion
frames.

\begin{corollary}
\label{coro:charac_PFF_intermsof_PF} For a family of subspaces
$\{\cW_i\}_{i \in I}$ of $\cH$ and positive weights $\{v_i\}_{i \in
I}$, the following conditions are equivalent.
\bitem
\item[(i)] $\{(W_i,v_i)\}_{i\in I}$ is a Parseval fusion frame for $\cH$.
\item[(ii)] There exists a Parseval frame $\{e_{ij}\}_{i\in I,j\in J_i}$ for
$\cH$ such that $\{1/v_i \cdot e_{ij}\}_{j\in J_i}$ is an
orthonormal basis for $\cW_i$ for all $i\in I$. \eitem
\end{corollary}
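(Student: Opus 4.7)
The plan is to derive Corollary \ref{coro:charac_PFF_intermsof_PF} as a direct repackaging of Theorem \ref{theo:charac_PFF_intermsof_isom}, using the Naimark characterization of Parseval frames as the bridge between isometries and frame vectors. The equivalence in Theorem \ref{theo:charac_PFF_intermsof_isom} is stated in terms of isometries $L_i:\cE_i\to\cW_i$ satisfying $P=\sum_i v_i L_i$, and the images of an ambient ONB under such $P$ already appeared in that proof. The corollary simply records that these images are the Parseval frame whose existence we now want to assert.

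For the direction (i)$\Rightarrow$(ii), I would apply Theorem \ref{theo:charac_PFF_intermsof_isom} to obtain a Hilbert space $\cK\supset\cH$, an orthonormal basis $\{\tilde{e}_{ij}\}_{i\in I,\,j\in J_i}$ of $\cK$, subspaces $\cE_i=\spann\{\tilde{e}_{ij}\}_{j\in J_i}$, and isometries $L_i:\cE_i\to\cW_i$ such that $P=\sum_{i\in I}v_iL_i$ is the orthogonal projection of $\cK$ onto $\cH$. I would then set $e_{ij}:=P(\tilde{e}_{ij})=v_iL_i(\tilde{e}_{ij})$. The family $\{e_{ij}\}$ is a Parseval frame for $\cH$ because the image of an orthonormal basis of $\cK$ under an orthogonal projection onto $\cH$ is Parseval (this is exactly the classical Naimark fact already invoked in the proof of Theorem \ref{theo:charac_PFF_intermsof_isom}). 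Moreover, since $L_i$ is an isometry and $\{\tilde{e}_{ij}\}_{j\in J_i}$ is an ONB of $\cE_i$, the rescaled vectors $\tfrac{1}{v_i}e_{ij}=L_i(\tilde{e}_{ij})$ form an ONB of $\cW_i$.

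For (ii)$\Rightarrow$(i), I would proceed by a direct computation essentially identical to the one in the second half of the proof of Theorem \ref{theo:charac_PFF_intermsof_isom}. Given $f\in\cH$, since $\{\tfrac{1}{v_i}e_{ij}\}_{j\in J_i}$ is an ONB of $\cW_i$, the Parseval identity inside $\cW_i$ yields
\[
v_i^2\|P_if\|^2 \;=\; v_i^2\sum_{j\in J_i}\Big|\Big\langle f,\tfrac{1}{v_i}e_{ij}\Big\rangle\Big|^2 \;=\; \sum_{j\in J_i}|\langle f,e_{ij}\rangle|^2.
\]
Summing over $i\in I$ and applying the Parseval property of $\{e_{ij}\}_{i\in I,\,j\in J_i}$ gives $\sum_{i\in I}v_i^2\|P_if\|^2=\|f\|^2$, so $\{(\cW_i,v_i)\}_{i\in I}$ is a Parseval fusion frame.

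There is no genuine obstacle here: both implications are bookkeeping on top of Theorem \ref{theo:charac_PFF_intermsof_isom}, and the one nontrivial ingredient, the Naimark identification of Parseval frames with projections of ONBs from a larger Hilbert space, is already packaged inside that theorem. The only care needed is to make clear that the frame vectors $e_{ij}$ produced in (i)$\Rightarrow$(ii) are precisely $P(\tilde{e}_{ij})$, so that the orthonormality of $\{\tfrac{1}{v_i}e_{ij}\}_{j\in J_i}$ in $\cW_i$ is inherited from the isometry property of $L_i$ and not assumed separately.
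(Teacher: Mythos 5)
Your proof is correct and follows exactly the route the paper intends: the paper gives no separate proof of this corollary, stating only that it follows from Theorem \ref{theo:charac_PFF_intermsof_isom} and its proof, and your argument is precisely the unpacking of that remark (the frame $\{e_{ij}\}$ is the projected ambient ONB, orthonormality of $\{\tfrac{1}{v_i}e_{ij}\}_{j\in J_i}$ comes from the isometry $L_i$, and the converse is the same Parseval computation as in the theorem's (ii)$\Rightarrow$(i)). No gaps.
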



\section{\ap{Construction of New Fusion Frames from Existing Ones}}
\label{sec:generalconstruction}

In this section, \ap{we present two general ways, namely the spatial
complement and the Naimark complement, for constructing a new fusion
frame from a given fusion frame and establish the relationship
between the parameters of the two fusion frames. A special case of
the construction methods presented here is reported in
\cite{CFMWZ09}. The result of \cite{CFMWZ09} deals only with the
construction of Parseval fusion frames in a finite dimensional
Hilbert space and does not investigate the relation between the new
and the original fusion frame parameters.}

\subsection{\ap{The Spatial Complement}}

Taking the spatial complement \ap{appears to be a} natural way for
generating a new fusion frame from a given fusion frame. \ap{We
begin by defining the notion of an \textit{orthogonal fusion frame
to a given fusion frame}, which is central to our discussion.}

\begin{definition}
Let $\{(\cW_i,v_i)\}_{i \in I}$ be a fusion frame for $\cH$. If the
family $\{(\cW_i^{\perp},v_i)\}_{i\in I}$\ap{, where $W_i^{\perp}$
is the orthogonal complement of $W_i$,} is also a fusion frame,
\ap{then} we call \ap{$\{(\cW_i^{\perp},v_i)\}_{i\in I}$} the
orthogonal fusion frame to $\{(\cW_i,v_i)\}_{i \in I}$.
\end{definition}

\begin{theorem}\label{theo:orthoFF}
\ap{Let} $\{(\cW_i,v_i)\}_{i \in I}$ be a fusion frame for $\cH$
with optimal fusion frame bounds $0<A\le B < \infty$.  Then the
following conditions are equivalent. \vspace*{0.2cm}
\bitem
\item[(i)] $\bigcap_{i \in I} \cW_i= \{0\}$.\vspace*{0.2cm}
\item[(ii)] $B < \sum_{i \in I} v_i^2 $.\vspace*{0.2cm}
\item[(iii)] The family $\{(\cW_i^{\perp},v_i)\}_{i \in I}$ is a fusion frame
for $\cH$ with optimal fusion frame bounds $\sum_{i \in I} v_i^2 -B$
and $\sum_{i \in I} v_i^2 -A$.\vspace*{0.2cm} \eitem
\end{theorem}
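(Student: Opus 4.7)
The central algebraic identity driving all three equivalences is the Pythagorean decomposition $\|P_i^\perp f\|^2 = \|f\|^2 - \|P_i f\|^2$, which follows from $\cH = \cW_i \oplus \cW_i^\perp$. Weighting by $v_i^2$ and summing over $i \in I$ yields
\[
\sum_{i \in I} v_i^2 \|P_i^\perp f\|^2 = \Big(\sum_{i \in I} v_i^2\Big) \|f\|^2 - \sum_{i \in I} v_i^2 \|P_i f\|^2.
\]
Since the bounds asserted in (iii) must be finite, $\sum_{i \in I} v_i^2 < \infty$ is implicit throughout; the degenerate case $\sum_{i \in I} v_i^2 = \infty$ is handled directly, as (ii) holds trivially and $\sum_{i \in I} v_i^2 \|P_i^\perp f\|^2 = \infty$ for all $f\neq 0$ rules out (iii). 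The boxed identity will serve as a two-way bridge between the fusion frame bounds of $\{(\cW_i, v_i)\}_{i \in I}$ and those of $\{(\cW_i^\perp, v_i)\}_{i \in I}$.

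The equivalence (ii) $\Leftrightarrow$ (iii) falls out of this identity almost mechanically. Substituting the hypothesis $A\|f\|^2 \le \sum_{i \in I} v_i^2 \|P_i f\|^2 \le B\|f\|^2$ sandwiches $\sum_{i \in I} v_i^2 \|P_i^\perp f\|^2$ between $\big(\sum_{i \in I} v_i^2 - B\big)\|f\|^2$ and $\big(\sum_{i \in I} v_i^2 - A\big)\|f\|^2$. Condition (ii) makes the lower bound strictly positive, so (iii) follows with the claimed bounds, and optimality of $\sum_{i\in I} v_i^2 - B$ and $\sum_{i\in I} v_i^2 - A$ transfers symmetrically: any strictly tighter complementary bound would, substituted back into the identity, contradict optimality of $A$ or $B$. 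Conversely, (iii) trivially implies (ii), since the lower fusion frame bound $\sum_{i \in I} v_i^2 - B$ in (iii) must be strictly positive.

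For (i) $\Leftrightarrow$ (ii), I first observe that $B \le \sum_{i \in I} v_i^2$ always holds, because $\sum_{i \in I} v_i^2$ is a trivial upper fusion frame bound via $\|P_i f\| \le \|f\|$. If a nonzero $f$ lies in $\bigcap_{i \in I} \cW_i$, then $\|P_i f\| = \|f\|$ for every $i$, so the assumed upper bound forces $B = \sum_{i \in I} v_i^2$, negating (ii); this proves (ii) $\Rightarrow$ (i). Conversely, assume $B = \sum_{i \in I} v_i^2$; optimality of $B$ supplies unit vectors $f_n$ with $\sum_{i \in I} v_i^2 \|P_i f_n\|^2 \to B$, hence $\sum_{i \in I} v_i^2 \|P_i^\perp f_n\|^2 \to 0$ by the identity. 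In finite dimensions, compactness of the unit sphere yields a subsequential limit $f$ of unit norm satisfying $\|P_i^\perp f\| = \lim_n \|P_i^\perp f_n\| = 0$ for every $i$, placing a nonzero vector in $\bigcap_{i \in I} \cW_i$ and contradicting (i).

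The compactness step in this last implication is the one genuine obstacle; all the other steps are purely algebraic consequences of the Pythagorean identity. In a general infinite-dimensional Hilbert space, I would replace the norm-convergent subsequence by a weakly convergent one, use weak lower semicontinuity of each $\|P_i^\perp \cdot\|$ together with the closedness of $\cW_i$ to place the weak limit in $\bigcap_{i \in I} \cW_i$, and exploit the uniform control $\sum_{i \in I} v_i^2 \|P_i^\perp f_n\|^2 \to 0$ (which forces each $\|P_i f_n\| \to 1$) to rule out a vanishing weak limit. Since the applications in this paper are finite-dimensional, the compactness version of the argument suffices for all downstream uses.
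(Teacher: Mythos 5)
Your argument is correct in the finite-dimensional setting and follows essentially the same route as the paper: the identity $\sum_{i\in I} v_i^2\|(I-P_i)f\|^2=\big(\sum_{i\in I} v_i^2\big)\|f\|^2-\sum_{i\in I} v_i^2\|P_if\|^2$ (equivalently the operator identity $\sum_{i\in I} v_i^2(I-P_i)=\big(\sum_{i\in I} v_i^2\big)I-S$) yields (ii) $\Leftrightarrow$ (iii) with the stated optimal bounds, and the equality case of $B\le\sum_{i\in I}v_i^2$ yields (i) $\Leftrightarrow$ (ii). The paper instead runs a cycle (iii) $\Rightarrow$ (i) $\Rightarrow$ (ii) $\Rightarrow$ (iii), proving (iii) $\Rightarrow$ (i) by the quick spanning observation that a nonzero $f\in\bigcap_{i\in I}\cW_i$ is orthogonal to every $\cW_i^\perp$; your direct two-way arguments are equally valid. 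Where you genuinely improve on the paper is in (i) $\Rightarrow$ (ii): the paper simply asserts that the optimal bound $B$ is attained by some $f\in\cH$, which is automatic only in finite dimensions (or when $\|S\|$ is an eigenvalue of $S$), whereas you correctly work with a maximizing sequence and close the argument by compactness of the unit sphere.

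One caution about your infinite-dimensional sketch: knowing $\|P_if_n\|\to 1$ for each fixed $i$ does not prevent the weak limit of $f_n$ from being zero, and in fact the implication (i) $\Rightarrow$ (ii) genuinely fails for infinite index sets. Take $\cH=\ell^2$ with orthonormal basis $\{e_n\}_{n\ge 1}$, $\cW_n=\{e_n\}^{\perp}$, and weights with $0<V:=\sum_n v_n^2<\infty$; then $S=VI-\sum_n v_n^2\, e_ne_n^{*}$, so $\{(\cW_n,v_n)\}$ is a fusion frame with optimal bounds $A=V-\max_n v_n^2>0$ and $B=V$ (the supremum is not attained since $v_n\to 0$), yet $\bigcap_n\cW_n=\{0\}$. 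So the finite-dimensionality (or attainment of $B$) you invoke is not a convenience but a necessity, both for your compactness step and for the theorem itself; the paper's proof of (i) $\Rightarrow$ (ii) silently assumes attainment.
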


\begin{proof}
(iii) $\Rightarrow$ (i):  Suppose that (i) is false. Then there
exists a vector $0\not= f\in \cap_{i \in I} \cW_i$. This implies $f
\perp \cW_i^{\perp}$ for all $i \in I$, hence $\{\cW_i^{\perp}\}_{i
\in I}$ does not span $\cH$. This is a contradiction to (iii).

(i) $\Rightarrow$ (ii): Since $B$ is optimal, by using the fusion
frame property, it follows that there exists some $f \in \cH$ so
that
\[
B\|f\|^2 = \Big\langle \sum_{i \in I} v_i^2 P_i f,f\Big\rangle
= \sum_{i \in I} v_i^2\|P_i f\|^2
\le \sum_{i  \in I} v_i^2 \|f\|^2.
\]
Hence
\begin{equation}\label{eq:Binequality}
B \le \sum_{i  \in I} v_i^2.
\end{equation}
It now suffices to observe that we have equality in \eqref{eq:Binequality} if and only if
\[
f \in \bigcap_{i  \in I} \cW_i \not= \{0\}.
\]

(ii) $\Rightarrow$ (iii):  Since $AI \le \sum_{i \in I} v_i^2P_i \le BI$, we have
\begin{equation}\label{eq:orthoPFF1}
\left ( \sum_{i \in I} v_i^2 -B\right ) I \le
\sum_{i \in I} v_i^2(I-P_i) \le \left ( \sum_{i \in I} v_i^2-A \right ) I.
\end{equation}
\ap{From} (ii), \ap{we have} $\sum_{i \in I} v_i^2 -B >0$ \ap{and} hence
\[
\{(\cW_i^{\perp},v_i)\}_{i \in I} = \{((I-P_i)\cH, v_i)\}_{i \in I},
\]
is a fusion frame. The fusion frame bounds from \eqref{eq:orthoPFF1} are optimal.
\end{proof}

\ap{The following theorem shows that all the parameters of the new fusion frame can be determined from those of the generating fusion frame prior to the construction.}

\begin{theorem}\label{theo:propertiesorthoFF}
Let $\{(\cW_i,v_i)\}_{i \in I}$ be a fusion frame for $\cH$, and let
$\{(\cW_i^{\perp},v_i)\}_{i \in I}$ be its associated orthogonal fusion frame.
Then the following conditions hold.
\bitem
\item[(i)] Let $S$ denote the frame operator for $\{(\cW_i,v_i)\}_{i \in I}$
with eigenvectors $\{e_j\}_{j \in J}$ and respective eigenvalues $\{\lambda_j\}_{j \in J}$.
Then the fusion frame operator for $\{(\cW_i^{\perp},v_i)\}_{i=1}^N$ possesses
the same eigenvectors $\{e_j\}_{j \in J}$ and respective eigenvalues
$\{\sum_{i \in I} v_i^2 - \lambda_j\}_{j \in J}$.
\item[(ii)] Assume that $\dim \cH < \infty$ and $m := \dim \cW_i$ for all $i \in I$. Then,
\[
d_c^2(\cW_i^{\perp},\cW_j^{\perp}) = d_c^2(\cW_i,\cW_j) + 2m-\dim \cH \quad \mbox{for all } i, j \in \{1,\ldots,N\}, \,i \neq j.
\]
\ap{where $d_c^2(\cW_i,\cW_j)$ denotes the \textit{squared chordal distance} between subspaces $\cW_i$ and $\cW_j$ and is given by
\[
d_c^2(\cW_i,\cW_j) = \dim \cH - \tr[P_i P_j].
\]}
\eitem
\end{theorem}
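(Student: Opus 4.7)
My plan is to reduce both claims to short operator identities. The central observation underlying both parts is that the orthogonal projection onto $\cW_i^{\perp}$ is exactly $I - P_i$, which converts everything into linear-algebraic manipulations of $S$ and traces of products of the $P_i$'s.

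For part (i), I would express the fusion frame operator $S^{\perp}$ of $\{(\cW_i^{\perp}, v_i)\}_{i \in I}$ as
\[
S^{\perp} \;=\; \sum_{i \in I} v_i^2 (I - P_i) \;=\; \Big(\sum_{i \in I} v_i^2\Big) I - S.
\]
Since $S^{\perp}$ is thereby a polynomial in $S$, any orthonormal eigenbasis $\{e_j\}_{j \in J}$ of $S$ is automatically an eigenbasis of $S^{\perp}$, and applying the above identity to $e_j$ immediately produces the eigenvalue $\sum_{i \in I} v_i^2 - \lambda_j$. This yields the eigenvector/eigenvalue statement verbatim.

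For part (ii), I would evaluate the cross-trace for the complementary projections by expanding bilinearly:
\[
\tr\bigl[(I-P_i)(I-P_j)\bigr] \;=\; \dim\cH - \tr P_i - \tr P_j + \tr[P_iP_j] \;=\; \dim\cH - 2m + \tr[P_iP_j],
\]
where I used $\tr P_k = \dim \cW_k = m$. Substituting this into the stated definition $d_c^2(\cW_i^{\perp},\cW_j^{\perp}) = \dim\cH - \tr[(I-P_i)(I-P_j)]$ gives $2m - \tr[P_iP_j]$, and invoking the same definition once more to rewrite $\tr[P_iP_j] = \dim\cH - d_c^2(\cW_i,\cW_j)$ produces the claimed identity $d_c^2(\cW_i^{\perp},\cW_j^{\perp}) = d_c^2(\cW_i,\cW_j) + 2m - \dim\cH$.

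The main obstacle is essentially nil: both parts are direct calculations built on the identity $P_i^{\perp} = I - P_i$ together with linearity and cyclicity of the trace. The only minor subtlety worth flagging is that the statement of part (i) does not presuppose that $\{(\cW_i^{\perp},v_i)\}_{i\in I}$ is itself a fusion frame; the spectral relation holds purely as an operator identity on $\cH$ and does not require the positivity hypothesis of Theorem \ref{theo:orthoFF}. In part (ii) one should also note that the hypothesis $\dim \cW_i = m$ for every $i \in I$ is essential, as it is what allows the cross-terms $\tr P_i$ and $\tr P_j$ to combine cleanly into $2m$.
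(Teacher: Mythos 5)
Your proposal is correct and follows essentially the same route as the paper: part (i) rests on the operator identity $\sum_{i\in I} v_i^2(I-P_i) = (\sum_{i\in I} v_i^2)I - S$ applied to the eigenbasis of $S$, and part (ii) on the bilinear expansion $\tr[(I-P_i)(I-P_j)] = \dim\cH - 2m + \tr[P_iP_j]$ combined with the definition of the squared chordal distance. Your added remark that (i) is a pure operator identity not requiring the complements to form a fusion frame is a fair observation, but the argument itself matches the paper's.
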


\begin{proof}
(i). For each $j \in J$, we have
\[
\sum_{i\in I} v_i^2 P_ie_j = \lambda_j e_j.
\]
Hence,
\[
\sum_{i\in I} v_i^2 (I-P_i)e_j = \left ( \sum_{i\in I} v_i^2 - \lambda_j \right ) e_j,
\]
which implies the claimed properties \ap{for} the fusion frame operator $S^{\perp}$.\\
(ii). The orthogonal projection \ap{onto $\cW_i^\perp$  is given} by $I-P_i$.
Hence,
\[
d_c^2(\cW_i^{\perp},\cW_j^{\perp}) = \dim \cH - \tr[(I-P_i)(I-P_j)].
\]
The claim follows from
\[
\tr[(I-P_i)(I-P_j)] = \tr[I-P_i-P_j+P_iP_j] = \dim \cH - 2 m + \tr[P_iP_j]
\]
and the definition of $d_c^2(\cW_i,\cW_j)$.
\end{proof}


\begin{corollary}\label{cor:Atight}
\ap{Let} $\{\cW_i\}_{i=1}^N$ be an $A$-tight fusion frame for
$\bR^M$ such that $\cW_{\ap{k}} \neq \cH$ for some $\ap{k} \in \{1,\ldots,N\}$. Then $\{W_i^{\perp}\}_{i=1}^N$
is an $(N-A)$-tight fusion frame for $\bR^M$. If $m := \dim \cW_i$ for all $i \in \{1,\ldots,N\}$
and $d^2 := d_c^2(\cW_i,\cW_j)$ for all $i, j \in \{1,\ldots,N\}$, $i \neq j$, then
\[
d_c^2(\cW_i^{\perp},\cW_j^{\perp}) = \ap{d^2} + 2m-M \quad \mbox{for all } i, j \in \{1,\ldots,N\}, \,i \neq j.
\]
\end{corollary}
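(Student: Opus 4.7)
The plan is to derive this corollary as a direct specialization of Theorems~\ref{theo:orthoFF} and~\ref{theo:propertiesorthoFF} to the case of unit weights. First I would verify that the orthogonal family is indeed a fusion frame, so that Theorem~\ref{theo:orthoFF} applies; then I would read off the tightness constant and the chordal distance formula from the parameter relations already established.

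For the existence and tightness step, since $v_i = 1$ for all $i$, we have $\sum_{i=1}^N v_i^2 = N$, and $A$-tightness gives the optimal bounds $A = B$. Condition~(ii) of Theorem~\ref{theo:orthoFF} therefore reduces to $A < N$. The key observation is that the tightness identity $\sum_{i=1}^N P_i = A I$ together with the operator bound $0 \le P_i \le I$ on each projection forces the following: if $A = N$, then $\sum_{i=1}^N (I - P_i) = 0$, and since each $I - P_i$ is positive semidefinite a sum of positive semidefinite operators can vanish only when each summand vanishes, so $P_i = I$ and $\cW_i = \cH$ for every $i$, contradicting the hypothesis $\cW_k \neq \cH$. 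Hence $A < N$, Theorem~\ref{theo:orthoFF} applies, and $\{\cW_i^\perp\}_{i=1}^N$ is a fusion frame with optimal bounds $N - B = N - A$ and $N - A$, which is precisely the statement that it is $(N-A)$-tight. As a sanity check, the same conclusion follows from Theorem~\ref{theo:propertiesorthoFF}(i): since $S = AI$ has $A$ as its only eigenvalue, the orthogonal fusion frame operator has only the eigenvalue $\sum v_i^2 - A = N - A$.

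For the chordal distance part, I would simply invoke Theorem~\ref{theo:propertiesorthoFF}(ii), with $\dim \cH = M$ and common subspace dimension $m$, to obtain, for any $i \neq j$,
\[
d_c^2(\cW_i^\perp, \cW_j^\perp) = d_c^2(\cW_i, \cW_j) + 2m - M = d^2 + 2m - M.
\]
The only mildly non-routine ingredient is the short bookkeeping step that derives $A < N$ from $\cW_k \neq \cH$; once this is established, every other claim is an immediate substitution of $v_i = 1$, $B = A$, $\dim \cW_i = m$, and $d_c^2(\cW_i,\cW_j) = d^2$ into the formulas of Theorems~\ref{theo:orthoFF} and~\ref{theo:propertiesorthoFF}.
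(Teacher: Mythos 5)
Your proposal is correct and follows essentially the same route as the paper: both reduce the corollary to Theorems~\ref{theo:orthoFF} and~\ref{theo:propertiesorthoFF} after checking that $A < N = \sum_i v_i^2$. The only (cosmetic) difference is in that check: the paper picks a nonzero $f \in \cW_k^\perp$ and drops the $k$-th term from $\sum_i \|P_i f\|^2$ to get a strict inequality, while you argue at the operator level that $A = N$ would force every $I - P_i$ to vanish; both are valid one-line arguments.
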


\begin{proof}
Assume that $W_{\ap{k}}\not= \bR^M$. Then by choosing some $0\not= f\in W_{\ap{k}}^{\perp}$, we obtain
\[
A \|f\|^2 = \sum_{i=1}^{N}v_i^2\|P_if\|^2 = \sum_{i\not= \ap{k}}v_i^2\|P_if\|^2
< \Big(\sum_{i=1}^Nv_i^2\Big) \|f\|^2.
\]
Thus we have $A < \sum_{i=1}^Nv_i^2$, and the application of Theorem \ref{theo:orthoFF}
proves the first part of the claim. The second part follows immediately from Theorem \ref{theo:propertiesorthoFF} (ii).
\end{proof}

\ap{A straightforward application of Corollary \ref{cor:Atight}
provides a way of constructing tight fusion frames with
equi-dimensional subspaces. This construction starts with a given
set of equi-dimensional subspaces that do not form a tight fusion
frames and fills up the Hilbert space by adding a new set of
subspaces, with the same dimension, to produce a tight fusion
frame.}

\begin{corollary} \label{coro:waterfilling}
Let $\{\cW_i\}_{i=1}^{N}$ be a family of $m$-dimensional subspaces
of $\bR^M$. Then there exist $N(M-1)$ $m$-dimensional subspaces
$\{\cV_i\}_{i=1}^{N(M-1)}$ of $\bR^M$ so that
$\{\cW_i\}_{i=1}^{N}\cup\{\cV_i\}_{i=1}^{N(M-1)}$ is a tight fusion
\ap{frame. Moreover, if $N=1$ and $\dim \cW_1=M-1$ then the
construction is minimal in the sense that it identifies the smallest
number of $m$-dimensional subspaces which need to be added to obtain
a tight fusion frame.}
\end{corollary}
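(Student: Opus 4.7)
The plan is to reduce the construction to the case $N = 1$ and then, for a single $m$-dimensional subspace $\cW$, adjoin $M - 1$ additional $m$-dimensional subspaces via cyclic shifts of an orthonormal basis adapted to $\cW$, producing an $m$-tight fusion frame of $M$ subspaces. The reduction is immediate from the additivity of fusion frame operators: if for each $i$ the family $\{\cW_i\}$ together with $M-1$ chosen $m$-dimensional subspaces forms an $m$-tight fusion frame for $\bR^M$, then summing the resulting $N$ fusion frame operators yields $Nm\, I$, so the union is an $(Nm)$-tight fusion frame containing exactly $N + N(M-1) = NM$ subspaces of dimension $m$.

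For the single-subspace step, fix $\cW \subseteq \bR^M$ with $\dim \cW = m$, extend an orthonormal basis of $\cW$ to an orthonormal basis $\{e_1, \ldots, e_M\}$ of $\bR^M$ with $\cW = \spann\{e_1, \ldots, e_m\}$, and for $k = 0, 1, \ldots, M-1$ define $\cV_k := \spann\{e_{k+1}, \ldots, e_{k+m}\}$, with indices interpreted modulo $M$, so that $\cV_0 = \cW$. Then $P_{\cV_k} = \sum_{j=1}^m e_{k+j} e_{k+j}^{T}$, and in the sum
\[
\sum_{k=0}^{M-1} P_{\cV_k} = \sum_{k=0}^{M-1} \sum_{j=1}^{m} e_{k+j}\, e_{k+j}^{T} = m\, I,
\]
each rank-one term $e_\ell e_\ell^{T}$ appears for exactly the $m$ values $k \in \{\ell-m, \ldots, \ell-1\} \pmod M$. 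Thus $\{\cV_0, \ldots, \cV_{M-1}\}$ is an $m$-tight fusion frame of $M$ subspaces containing $\cW$, and applying this construction independently to each $\cW_i$ produces the required $N(M-1)$ additional subspaces.

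For minimality when $N = 1$ and $\dim \cW_1 = M - 1$, suppose $\{\cW_1, \cV_1, \ldots, \cV_K\}$ is any tight fusion frame of $(M-1)$-dimensional subspaces of $\bR^M$. Every such subspace is a proper hyperplane, so Corollary \ref{cor:Atight} applies and the orthogonal family $\{\cW_1^\perp, \cV_1^\perp, \ldots, \cV_K^\perp\}$ is a tight fusion frame of $K+1$ one-dimensional subspaces; since a fusion frame must span $\bR^M$, we obtain $K + 1 \ge M$, matching our construction exactly. The heart of the argument is identifying the cyclic-shift construction; once it is in place the rank arithmetic is elementary modular bookkeeping, and the reduction to $N = 1$ together with the minimality lower bound are short consequences of fusion frame operator additivity and Corollary \ref{cor:Atight}.
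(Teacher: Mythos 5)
Your proof is correct and follows essentially the same route as the paper: the cyclic-shift construction $\cV_k = \spann\{e_{k+1},\ldots,e_{k+m}\}$ is exactly the paper's $\{T_i^k\cW_i\}_{k=0}^{M-1}$, summed over $i$ via additivity of fusion frame operators, and the minimality step likewise passes to orthogonal complements of the hyperplanes. If anything, your minimality argument is stated more carefully, since deducing the inequality $K+1 \ge M$ from the spanning of the one-dimensional complements is what the lower bound actually requires (the paper asserts the equality $1+N_1=M$, which is not literally forced, but the inequality is all that is needed).
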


\begin{proof}
For each $i=1,\ldots,N$, we choose an orthonormal basis
$\{e_j^i\}_{j=1}^M$ for $\bR^M$ in such a way that
$\{e_j^i\}_{j=1}^m$ is an orthonormal basis for $\cW_i$. Let $T_i$,
$i=1,\ldots,N$, denote the \ap{circular} shift operator on the
orthonormal basis $\{e_j^i\}_{j=1}^M$. Then
\[ \{T_i^k \cW_i\}_{i=1,k=0}^{N\; ,M-1},\] is a tight fusion frame for $\bR^M$ of
$m$-dimensional subspaces which contains $\{\cW_i\}_{i=1}^{N}$.

\ap{Now consider the case where $N=1$ and $\dim \cW_1=M-1$.} Let
$\{\cV_i\}_{i=1}^{N_1}$ be any collection of ($M-1$)-dimensional
subspaces so that $\{\cW_1\}\cup \{\cV_i\}_{i=1}^{N_1}$ is a tight
fusion frame. By Theorem \ref{theo:orthoFF}, we have $1+N_1 =M$,
hence $N_1 = M-1$, which equals $N(M-1)$.
\end{proof}


\subsection{The Naimark Complement}

\ap{Another approach to constructing a new fusion frame from an
existing one is to use the notion of \textit{Naimark complement}.
This approach however applies to Parseval fusion frames only, as
stated in the following theorem.}

\begin{theorem}\label{theo:Naimark_PFF}
Let $\{(\cW_i,v_i)\}_{i \in I}$ be a Parseval fusion frame for
$\cH$. Then there exists a Hilbert space $\cK \supseteq \cH$ and a
Parseval fusion frame $\{(\cW_i',\sqrt{1-v_i^2})\}_{i \in I}$ for
$\cK \ominus \cH$ with the following properties.
\bitem
\item[(i)] $\dim \cW_i' = \dim \cW_i$ for all $i \in I$.
\item[(ii)] If $\dim \cH < \infty$ and $\dim \cW_i = \dim \cW_j$ for all $i, j \in I$, $i \neq j$, then
\[
d_c^2(\cW_i',\cW_j') = d_c^2(\cW_i,\cW_j) \quad \mbox{for all } i, j \in \{1,\ldots,N\}, \,i \neq j.
\]
\eitem
\end{theorem}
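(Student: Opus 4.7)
The plan is to build the Naimark-complement fusion frame directly from the isometry characterization of Parseval fusion frames provided by Theorem~\ref{theo:charac_PFF_intermsof_isom}. Applying that characterization to $\{(\cW_i,v_i)\}_{i\in I}$ yields a Hilbert space $\cK\supset\cH$, an orthonormal basis $\{e_j\}_{j\in J}$ of $\cK$, a partition $\{J_i\}_{i\in I}$, and isometries $L_i:\cE_i\to\cW_i$ with $\cE_i:=\spann\{e_j\}_{j\in J_i}$ such that $P:=\sum_iv_iL_i$ is the orthogonal projection of $\cK$ onto $\cH$. I would then set $\cK':=\cK\ominus\cH$, $Q:=I-P$, $w_i:=\sqrt{1-v_i^2}$, and take as candidates $\cW_i':=Q(\cE_i)\subset\cK'$ together with the rescaled maps $L_i':=\frac{1}{w_i}Q|_{\cE_i}:\cE_i\to\cW_i'$.

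The key observation is that each $L_i'$ is an isometry: for $x\in\cE_i$, the identity $Px=v_iL_ix$ combined with $L_i$ being an isometry gives $\|Px\|^2=v_i^2\|x\|^2$, whence $\|Qx\|^2=\|x\|^2-\|Px\|^2=w_i^2\|x\|^2$. This immediately implies $\dim\cW_i'=\dim\cE_i=\dim\cW_i$, which is property (i). Now, for any $y\in\cK$, writing $y=\sum_iy_i$ with $y_i\in\cE_i$ (valid since the $\cE_i$ partition $\cK$) gives $Qy=\sum_iQy_i=\sum_iw_iL_i'y_i$, so $Q=\sum_iw_iL_i'$ is the orthogonal projection of $\cK$ onto $\cK'$. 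Invoking the (ii)$\Rightarrow$(i) direction of Theorem~\ref{theo:charac_PFF_intermsof_isom}, with $\cK'$ playing the role of the target Hilbert space, then yields that $\{(\cW_i',w_i)\}_{i\in I}$ is a Parseval fusion frame for $\cK'$.

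For property (ii), I would compute $\tr[P_{\cW_i'}P_{\cW_j'}]$ directly in terms of $\tr[P_{\cW_i}P_{\cW_j}]$. Noting that $\{L_ie_k\}_{k\in J_i}$ is an ONB of $\cW_i$ and $\{Qe_k/w_i\}_{k\in J_i}$ is an ONB of $\cW_i'$, and exploiting $\langle e_k,e_l\rangle=0$ for $k\in J_i,\,l\in J_j,\,i\neq j$ together with $Pe_l=v_jL_je_l\in\cH$ and self-adjointness of $P$, a short computation gives
\[
\langle Qe_k,Qe_l\rangle=-\langle Pe_k,e_l\rangle=-v_iv_j\langle L_ie_k,L_je_l\rangle,
\]
and therefore
\[
\tr[P_{\cW_i'}P_{\cW_j'}]=\frac{v_i^2v_j^2}{w_i^2w_j^2}\,\tr[P_{\cW_i}P_{\cW_j}].
\]
Plugging this trace identity into the chordal-distance formula $d_c^2(\cdot,\cdot)=\dim(\text{ambient})-\tr[\cdot\,\cdot]$, using $\dim\cW_i'=\dim\cW_i$ from (i) and the Parseval trace constraint $\sum_iv_i^2\dim\cW_i=\dim\cH$ to relate $\dim\cK'=\dim\cK-\dim\cH$ to $\dim\cH$ in the equi-dimensional case, should deliver the claimed equality $d_c^2(\cW_i',\cW_j')=d_c^2(\cW_i,\cW_j)$.

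The main obstacle is part (ii): reconciling the weight ratio $v_i^2v_j^2/w_i^2w_j^2$ appearing in the trace formula with the change of ambient dimension from $\dim\cH$ to $\dim\cK'$. The equi-dimensional hypothesis is precisely what forces the two corrections to cancel and makes the identity independent of $i$ and $j$, so the delicate part of the argument is keeping track of how the Parseval trace relation pins down $\dim\cK'$ in a way that matches the scalar factor produced by the isometries $L_i'$.
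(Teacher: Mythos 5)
Your dilation construction, the verification that $\{(\cW_i',\sqrt{1-v_i^2})\}_{i\in I}$ is a Parseval fusion frame for $\cK\ominus\cH$, and part (i) are correct and essentially coincide with the paper's argument: the paper produces the same $P$ and the same orthonormal basis by dilating the Parseval frame $\{v_if_{ij}\}$ of local orthonormal bases (which is exactly how Theorem~\ref{theo:charac_PFF_intermsof_isom} is proved) and then checks directly that $\{(I-P)e_{ij}\}_{j\in J_i}$ is an orthogonal system of norm $\sqrt{1-v_i^2}$ spanning $\cW_i'$. Routing this through the isometry characterization is a tidy repackaging rather than a genuinely different route. (Both versions tacitly need $v_i<1$ so that $w_i>0$.)

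The genuine gap is the last step of (ii). Your identity
\[
\tr[P_i'P_j']=\frac{v_i^2v_j^2}{(1-v_i^2)(1-v_j^2)}\,\tr[P_iP_j]
\]
is correct, but the scalar factor does not cancel, and equi-dimensionality cannot make it cancel: the factor equals $1$ precisely when $v_i^2+v_j^2=1$, a condition on the weights with no relation to the subspace dimensions, and it already fails for an equal-weight Parseval fusion frame with common weight $v\neq 1/\sqrt{2}$ (for instance the three-line Mercedes-Benz fusion frame in $\RR^2$, where $v^2=2/3$ and the factor is $4$). The ambient-dimension bookkeeping cannot absorb it either, since the discrepancy $(v_i^2v_j^2/(w_i^2w_j^2)-1)\tr[P_iP_j]$ depends on the pair $(i,j)$ while any dimension count contributes a constant. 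So the sentence ``the equi-dimensional hypothesis is precisely what forces the two corrections to cancel'' is not something you can prove. What your computation does establish is the weight-free Gram identity $w_i^2w_j^2\,\tr[P_i'P_j']=v_i^2v_j^2\,\tr[P_iP_j]$, equivalently $\tr[E_iE_i^TE_jE_j^T]=\tr[F_iF_i^TF_jF_j^T]$ for the unnormalized spanning matrices; this is exactly the identity the paper derives before saying ``comparison completes the proof,'' the paper having written $\tr[P_iP_{i'}]=\tr[F_iF_i^TF_{i'}F_{i'}^T]$ even though $F_iF_i^T=v_i^2P_i$ and $E_iE_i^T=w_i^2P_i'$. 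Your more careful normalization therefore does not merely postpone a computation: it shows that equality of chordal distances of the orthogonal projections holds only when $\tr[P_iP_j]=0$ or $v_i^2+v_j^2=1$, and you should either prove (ii) under that restriction (e.g.\ common weight $1/\sqrt{2}$) or state the trace relation with the explicit weight factor.
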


\begin{proof}
For each $i \in I$, let $\{f_{ij}\}_{j \in J_i}$ be an orthonormal basis for $\cW_i$. Then the family
\[
\{v_i f_{ij}\}_{i \in I, j \in J_i}
\]
is a Parseval frame for $\cH$.  By \cite{Cas00,Chr03,HL00}, there exists a
Hilbert space $\cK \supseteq \cH$, an orthogonal projection $P: \cK
\rightarrow \cH$, and an orthonormal basis $\{e_{ij}\}_{i \in I, j
\in J_i}$ for $\cK$ so that \beq \label{eq:relation_e_f} Pe_{ij} =
v_i f_{ij}, \qquad i \in I,\, j \in J_i. \eeq This implies that
$\{(I-P)e_{ij}\}_{i \in I, j \in J_i}$ is a Parseval frame for $\cK
\ominus \cH$. Further,
\[
\|(I-P)e_{ij}\| = \sqrt{1-v_i^2}, \qquad i \in I,\, j \in J_i,
\]
and, for $j, j' \in J_i$, $j \neq j'$, we have
\[
\langle (I-P)e_{ij},(I-P)e_{ij'}\rangle
= - \langle Pe_{ij},e_{ij'}\rangle
= - \langle v_i f_{ij},v_i f_{ij'}\rangle
= 0.
\]
Defining
\[
\cW_i' = \spann\{(I-P)e_{ij} : j \in J_i\},
\]
we conclude that $\{(\cW_i',\sqrt{1-v_i^2})\}_{i \in I}$ is a Parseval fusion frame for $\cK \ominus \cH$.\\
(i). By construction,
\[
\dim \cW_i' = |J_i| = \dim \cW_i \qquad \mbox{for all } i \in I.
\]
(ii). Set $M := \dim \cH$, $L := \dim \cK$, $I:= \{1,\ldots,N\}$,
and $m := \dim \cW_i$ for all $i \in \{1,\ldots,N\}$. For the sake
of brevity, we define $E_i := ((I-P)e_{i1}, \ldots, (I-P)e_{im}) \in
\RR^{M \times m}$ and $F_i := (v_i f_{i1}, \ldots, v_i f_{im}) \in
\RR^{M \times m}$. Then, for every $i, i' \in \{1,\ldots,N\}$, $i
\neq i'$, we obtain
\[
\tr[P_i P_{i'}]
= \tr[F_i F_i^T F_{i'} F_{i'}^T]
= \tr[(F_{i'}^T F_i) (F_i^T F_{i'})]
= \tr[(\ip{v_i f_{i'j}}{v_i f_{ik}})_{j,k} (\ip{v_i f_{ij}}{v_i f_{i'k}})_{j,k}].
\]
By employing \eqref{eq:relation_e_f},
\beq
\label{eq:tracefirstclaim} \tr[P_i P_{i'}] = \tr[(\ip{P e_{i'j}}{P
e_{ik}})_{j,k} (\ip{P e_{ij}}{P e_{i'k}})_{j,k}].
\eeq
Now letting $P_i'$ denote the orthogonal projection onto $\cW_i'$, for each $i,
i' \in \{1,\ldots,N\}$, $i \neq i'$, the definition of $\cW_i'$
implies
\[
\tr[P_i' P_{i'}']
= \tr[E_i E_i^T E_{i'} E_{i'}^T]
= \tr[(E_{i'}^T E_i) (E_i^T E_{i'})]
\]
and
\[
(E_{i'}^T E_i)
= \ap{( \ip{(I-P)e_{i'j}}{(I-P)e_{ik}})_{j,k}.}
\]
Utilizing the choice of $\{e_{ij}\}$ and careful dealing with the inner products on $\cK, \cH$, and $\cK \ominus \cH$, for each $j, k$,
\ap{\[
\ip{(I-P)e_{i'j}}{(I-P)e_{ik}}
= \ip{e_{i'j}}{e_{ik}} - \ip{Pe_{i'j}}{Pe_{ik}}
= - \ip{Pe_{i'j}}{Pe_{ik}}.
\]}
Combining the above three equations,
\[
\tr[P_i' P_{i'}'] = \tr[(\ip{Pe_{i'j}}{Pe_{ik}})_{j,k} (\ip{Pe_{ij}}{Pe_{i'k}})_{j,k}].
\]
Comparison with \eqref{eq:tracefirstclaim} completes the proof.
\end{proof}

\begin{definition}
Let $\{(\cW_i,v_i)\}_{i \in I}$ be a tight fusion frame for $\cH$.
We refer to the tight fusion frame $\{(\cW_i',\sqrt{1-v_i^2})\}_{i
\in I}$ for $\cK \ominus \cH$ from Theorem \ref{theo:Naimark_PFF} as
\ap{the Naimark fusion frame associated with $\{(\cW_i,v_i)\}_{i \in
I}$. The rationale for this terminology is} that this is the
fusion frame version of  the Naimark theorem \cite{Cas00,Chr03,HL00}.
\end{definition}

\begin{corollary}
\ap{Let} $\{\cW_i\}_{i=1}^N$ be an $A$-tight fusion frame for
$\bR^M$. Then there exists some $L \ge M$ and a
$\sqrt{1-1/A^2}$-tight fusion frame for $\bR^{L-M}$ which satisfies
$\dim \cW_i' = \dim \cW_i$ for all $i \in \{1,\ldots,N\}$. If, in
addition, $d^2 := d_c^2(\cW_i,\cW_j)$ for all $i, j \in
\{1,\ldots,N\}$, $i \neq j$, then
\[
d_c^2(\cW_i',\cW_j') = d^2 \quad \mbox{for all } i, j \in \{1,\ldots,N\}, \,i \neq j.
\]
\end{corollary}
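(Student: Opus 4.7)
The plan is to reduce directly to the Naimark complement theorem (Theorem \ref{theo:Naimark_PFF}) for Parseval fusion frames. The only extra bookkeeping is to move between the unit-weight tight regime and the Parseval regime, since Theorem \ref{theo:Naimark_PFF} is phrased exclusively for Parseval fusion frames.

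First I would renormalize. Since $\{\cW_i\}_{i=1}^N$ is $A$-tight with implicit unit weights, its fusion frame operator satisfies $\sum_{i=1}^N P_i = A\,I_{\bR^M}$, so the weighted family $\{(\cW_i,\,1/\sqrt{A})\}_{i=1}^N$ is a Parseval fusion frame for $\bR^M$.

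Next I would apply Theorem \ref{theo:Naimark_PFF} to this Parseval fusion frame. In the finite-dimensional setting the encompassing Hilbert space $\cK$ may be identified with $\bR^L$ for some $L \geq M$ (one can take $L = \sum_{i=1}^N \dim \cW_i$, matching the size of the Naimark dilation of the associated Parseval frame), and the theorem produces a Parseval fusion frame $\{(\cW_i',\,\sqrt{1-1/A})\}_{i=1}^N$ for $\cK \ominus \bR^M = \bR^{L-M}$. Because the Naimark weights are uniform across $i$, the tightness constant attached to $\{\cW_i'\}_{i=1}^N$ viewed with unit weights can be read off from that common weight, which produces the tight fusion frame for $\bR^{L-M}$ asserted in the statement.

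The remaining two assertions drop out of Theorem \ref{theo:Naimark_PFF} immediately: part (i) yields $\dim \cW_i' = \dim \cW_i$ for each $i$, and under the equi-dimensionality hypothesis required by part (ii), the identity $d_c^2(\cW_i',\cW_j') = d_c^2(\cW_i,\cW_j) = d^2$ follows for every $i \neq j$. The proof is essentially a matter of translating uniform weights at the two ends of the Naimark construction; I do not anticipate any genuine obstacle beyond carefully tracking how the single common weight $\sqrt{1-1/A}$ transforms the tightness constant when the weights are normalized back to unity.
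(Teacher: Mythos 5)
Your route is the same as the paper's: the printed proof of this corollary is literally ``this follows immediately from Theorem \ref{theo:Naimark_PFF}'', and your renormalization of $\{\cW_i\}_{i=1}^N$ to the Parseval fusion frame $\{(\cW_i,1/\sqrt{A})\}_{i=1}^N$ followed by an application of that theorem is exactly the intended reduction; parts (i) and (ii) of the theorem then give the dimension and chordal-distance claims verbatim, and your identification of $\cK$ with $\bR^L$, $L=\sum_i \dim\cW_i$, is fine.

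The problem is your final sentence, which asserts a match with the stated constant that your own (correct) computation contradicts. The common Naimark weight is $\sqrt{1-v_i^2}=\sqrt{1-1/A}$, so the unit-weight family $\{\cW_i'\}_{i=1}^N$ satisfies $\sum_{i=1}^N P_i' = \frac{A}{A-1}\,I$ on $\cK\ominus\cH$ and is therefore $\frac{A}{A-1}$-tight under the paper's convention for $c$-tightness (the same convention used in Corollary \ref{cor:Atight}, where the spatial complement is $(N-A)$-tight). Under no normalization consistent with Theorem \ref{theo:Naimark_PFF} does $\sqrt{1-1/A^2}$ appear: that expression is $\sqrt{1-v_i^2}$ for $v_i=1/A$, but $\{(\cW_i,1/A)\}_{i=1}^N$ is $(1/A)$-tight rather than Parseval, so the theorem does not apply to it. In short, the constant in the corollary as printed cannot be produced by this argument (it is presumably a slip for $\sqrt{1-1/A}$, the weight of the Parseval Naimark complement, or for the bound $\frac{A}{A-1}$); you need to state explicitly which constant your argument delivers instead of deferring to the statement, since ``read off from that common weight'' is precisely the step where the discrepancy lives.
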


\begin{proof}
This follows immediately from Theorem \ref{theo:Naimark_PFF}.
\end{proof}

We \ap{note} that Theorem \ref{theo:Naimark_PFF} is \ap{not always
constructive}, since it requires the knowledge of a larger Hilbert
space from which the given Parseval frame is derived by an
orthogonal projection of an orthonormal basis.


\section{Existence and Construction of \ap{A} Fusion \ap{Frame} with \ap{A Desired} Fusion Frame Operator}
\label{sec:fusionframeoperator}

\ap{We now focus on the existence and construction of fusion frames
whose fusion frame operators possess a \ap{desired} set of
eigenvalues. We answer the following questions: (1) Given a set of
eigenvalues, does there exist a fusion frame whose fusion frame
operator possesses those eigenvalues? (2) If such a fusion frame
exists how can it be constructed?}

\ap{Let} $\lambda_1 \ge \ldots \ge \lambda_M > 0$, $M \in \NN$, be real positive values satisfying a factorization as
\[
\hspace*{-2cm} \mbox{{\sc (Fac)}} \hspace*{2cm} \sum_{j=1}^M \lambda_j = Nm \in \NN.
\]
We wish to construct a fusion frame $\{\cW_i\}_{i=1}^N$, $\cW_i \subseteq \bR^M$, such that
\bitem
\item[{\sc (FF1)}] $\dim \cW_i = m$ for all $i=1\ldots,N$, and
\item[{\sc (FF2)}] the associated fusion frame operator has $\{\lambda_j\}_{j=1}^M$ as its eigenvalues.
\eitem

\subsection{The Integer Case}

\ap{We first consider the simple case where $\lambda_j \in \NN$ for
all $i=1,\ldots,M$. This case is central to developing intuition
about the construction algorithms to be developed.}

\begin{proposition}
\ap{If} the positive integers $N \ge
\lambda_1\geq\lambda_2\geq\cdots\geq\lambda_M>0$, $N \in \NN$, and
$m \in \NN$ satisfy {\sc (Fac)}, then the fusion frame
$\{\cW_i\}_{i=1}^N$ constructed via the {\sc (FFCIE)} algorithm
outlined in Figure \ref{fig:integerfusionframealgorithm} satisfies
both {\sc (FF1)} and {\sc (FF2)}.
\end{proposition}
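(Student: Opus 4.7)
The plan is to show that the (FFCIE) algorithm produces subspaces that are spanned by subsets of the standard orthonormal basis $\{e_1,\ldots,e_M\}$ of $\bR^M$, in such a way that (a) each subspace has exactly $m$ distinct basis vectors in it, and (b) each $e_j$ is used in exactly $\lambda_j$ of the subspaces. Once these two combinatorial facts are in place, the analytic conclusions (FF1) and (FF2) follow immediately.

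First I would describe, or at least reconstruct, what the algorithm does: form a list of length $\sum_{j=1}^M \lambda_j = Nm$ by writing $e_j$ repeated $\lambda_j$ times for $j=1,2,\ldots,M$ (in nonincreasing order of $\lambda_j$), and then read this list off cyclically into $N$ buckets labeled $\cW_1,\ldots,\cW_N$, placing the $t$-th entry into the bucket indexed by $((t-1)\bmod N)+1$. Define $\cW_i$ to be the span of the basis vectors assigned to bucket $i$. (This is the standard ``round-robin'' fill familiar from the Gale--Ryser construction; it is essentially what one gets by filling a $0/1$ matrix with row sums $m$ and column sums $\lambda_j$.)

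Next I would verify the dimension count {\sc (FF1)}. Since the buckets receive consecutive entries of a list of length $Nm$ cyclically, each $\cW_i$ is assigned exactly $m$ entries. The key observation is that the $\lambda_j$ copies of $e_j$ are placed in $\lambda_j$ \emph{consecutive} bucket indices modulo $N$, and because the hypothesis gives $\lambda_j \le N$, these indices are pairwise distinct. Hence no bucket ever receives $e_j$ twice, and the $m$ entries assigned to $\cW_i$ are distinct basis vectors, so $\dim \cW_i = m$. This distinctness step is where the inequality $\lambda_1 \le N$ (and hence $\lambda_j \le N$ for all $j$) is essential and, to my eye, is the only nontrivial part of the argument.

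Finally I would verify the eigenvalue condition {\sc (FF2)}. By construction the projection $P_i$ onto $\cW_i$ is a diagonal $0/1$ matrix in the standard basis, with a $1$ in position $(j,j)$ precisely when $e_j$ is among the basis vectors assigned to bucket $i$. Summing,
\[
S = \sum_{i=1}^N P_i = \diag(d_1,\ldots,d_M),
\qquad d_j = \#\{i : e_j \in \cW_i\} = \lambda_j,
\]
where the last equality is exactly the column-sum property established in the previous step. Thus $S$ has eigenvectors $\{e_j\}_{j=1}^M$ with eigenvalues $\{\lambda_j\}_{j=1}^M$, proving {\sc (FF2)}. The factorization hypothesis {\sc (Fac)} guarantees that the list length $Nm$ and the total number of basis-vector placements agree, so the algorithm terminates correctly; the monotonicity $\lambda_1 \ge \cdots \ge \lambda_M$ is only used to keep the listing (and therefore the argument about consecutive cyclic positions) orderly and is not logically required for the existence conclusion.
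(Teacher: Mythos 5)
Your proof is correct and takes essentially the same route as the paper's: the paper likewise reduces everything to the observation that the copies of each $e_j$ land in distinct buckets of the round-robin assignment, which can only fail if some $\lambda_{j} > N$, excluded by hypothesis. You merely spell out the step the paper calls ``automatic,'' namely that $S=\sum_{i=1}^N P_i = \mathrm{diag}(\lambda_1,\ldots,\lambda_M)$ once each $\cW_i$ is the span of $m$ distinct standard basis vectors.
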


\begin{figure}[h]
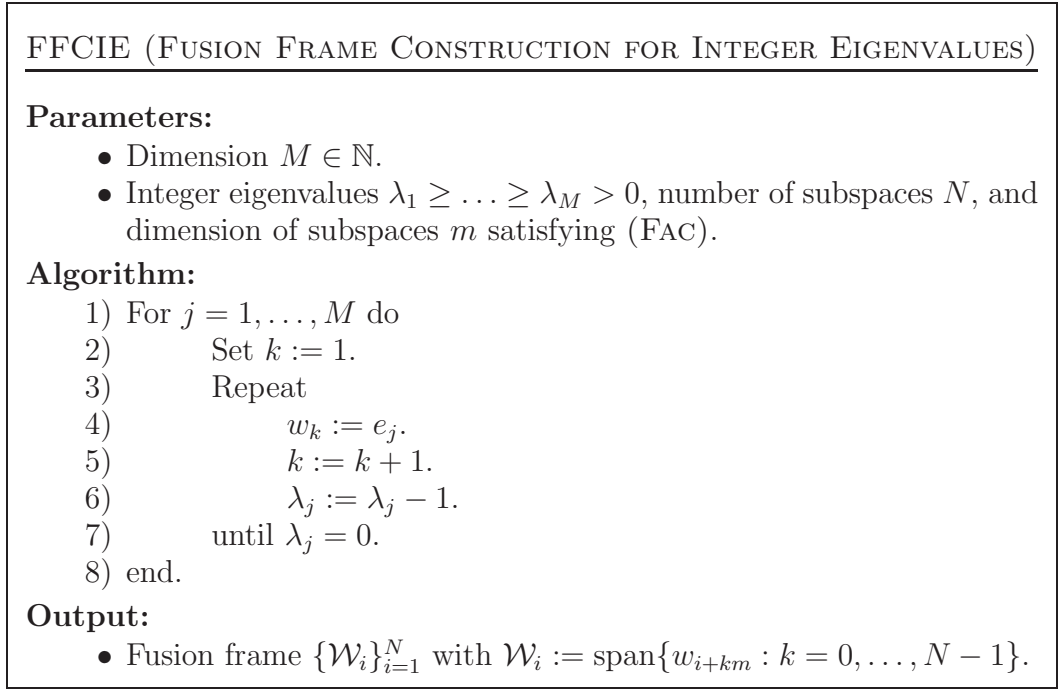

\centering
\framebox{
\begin{minipage}[h]{5.3in}
\vspace*{0.3cm}
{\sc \underline{FFCIE (Fusion Frame Construction for Integer Eigenvalues)}}

\vspace*{0.4cm}

{\bf Parameters:}\\[-3ex]
\begin{itemize}
\item Dimension $M \in \NN$.
\item Integer eigenvalues $\lambda_1 \ge \ldots \ge \lambda_M > 0$, number of subspaces $N$,
and dimension of subspaces $m$ satisfying {\sc (Fac)}.
\end{itemize}

{\bf Algorithm:}\\[-3ex]
\begin{itemize}
\item[1)] For $j=1,\ldots,M$ do
\item[2)] \hspace*{1cm} Set $k := 1$.
\item[3)] \hspace*{1cm} Repeat
\item[4)] \hspace*{2cm} $w_k := e_j$.
\item[5)] \hspace*{2cm} $k := k+1$.
\item[6)] \hspace*{2cm} $\lambda_j := \lambda_j - 1$.
\item[7)] \hspace*{1cm} until $\lambda_j = 0$.
\item[8)] end.
\end{itemize}

{\bf Output:}\\[-3ex]
\begin{itemize}
\item Fusion frame $\{\cW_i\}_{i=1}^N$ with $\cW_i := \spann\{w_{i+km} : k = 0,\ldots,N-1\}$.
\end{itemize}
\vspace*{0.01cm}
\end{minipage}
}
\caption{\ap{The {\sc FFCIE}} Algorithm \ap{for constructing} a fusion
frame with \ap{a} fusion frame operator \ap{with prescribed integer
eigenvalues}.} \label{fig:integerfusionframealgorithm}
\end{figure}

\begin{proof}
If the set of vectors
\[
\{w_{i+km} : k = 0,\ldots,N-1\}
\]
is pairwise orthogonal for each $i = 1,\ldots,N$, then {\sc (FF1)}
and {\sc (FF2)} follow automatically. Now fix $i \in
\{1,\ldots,N\}$. By construction, it is sufficient to show
that, for each $0 \le k \le N-2$, the vectors $w_{i+km}$ and
$w_{i+(k+1)m}$ are orthogonal. Again by construction, the only
possibility for this to fail is that there exists some $j_0 \in
\{1,\ldots,M\}$ satisfying $\lambda_{i_0} > N$. But this was
excluded by the hypothesis.
\end{proof}

\ap{The algorithm \ap{outlined} in Figure
\ref{fig:integerfusionframealgorithm} shuffles the intended
eigenvalues in terms of associated unit vectors $e_1, \ldots, e_M
\in \bR^M$ as basis vectors into the subspaces of the fusion frame
to be constructed.} Considering \ap{a} matrix $W \in \RR^{Nm \times
M}$ with the vectors $w_1,\ldots,w_{Nm}$ as rows, intuitively {\sc
(FFCIE)} fills this matrix up from top to bottom, row by row in such
\ap{a} way that the $\ell_2$ norm of the rows is $1$, the $\ell_2$
norm of column $j$ is $\lambda_j$, $j=1,\ldots,M$, and the
columns are orthogonal. The vectors $w_k$ are then assigned to
subspaces in such a way that the vectors assigned to each subspace
forms an orthonormal system. We note that the generated vectors
$w_k$, $k=1\ldots,Nm$ are as sparse as possible, providing optimal
fast computation abilities.

We wish to note that the condition $N \ge \lambda_1$ is necessary
for \ap{{\sc (FFCIE)}}. The question whether \ap{or not} this is necessary in
general is much more involved and will not be discussed here.

\subsection{The General Case}

\ap{We now discuss the general case where the desired eigenvalues
for the fusion frame operator are real positive values that satisfy
{\sc (Fac)}.}

\subsubsection{The Algorithm}

As a first step we generalize {\sc (FFCIE)} (see Figure
\ref{fig:integerfusionframealgorithm}) by introducing Lines 4) --
9), which deal with the non-integer parts. The \ap{construction}
algorithm for real eigenvalues\ap{, called {\sc (FFCRE)},} is
\ap{outlined} in Figure \ref{fig:fusionframealgorithm}.

\begin{figure}[h]
\centering
\framebox{
\begin{minipage}[h]{5.3in}
\vspace*{0.3cm}
{\sc \underline{FFCRE (Fusion Frame Construction for Real Eigenvalues)}}

\vspace*{0.4cm}

{\bf Parameters:}\\[-3ex]
\begin{itemize}
\item Dimension $M \in \NN$.
\item Eigenvalues $\lambda_1 \ge \ldots \ge \lambda_M > 0$, number of subspaces $N$, and dimension of subspaces $m$
satisfying {\sc (Fac)}.
\end{itemize}

{\bf Algorithm:}\\[-3ex]
\begin{itemize}
\item[1)] For $j=1,\ldots,M$ do
\item[2)] \hspace*{1cm} Set $k := 1$.
\item[3)] \hspace*{1cm} Repeat
\item[4)] \hspace*{2cm} If $\lambda_j < 2$ and $\lambda_j \neq 1$ then
\item[5)] \hspace*{3cm} $w_k := \sqrt{\frac{\lambda_j}{2}} \cdot e_j + \sqrt{1-\frac{\lambda_j}{2}} \cdot e_{j+1}$.
\item[6)] \hspace*{3cm} $w_{k+1} := \sqrt{\frac{\lambda_j}{2}}\cdot e_j - \sqrt{1-\frac{\lambda_j}{2}} \cdot e_{j+1}$.
\item[7)] \hspace*{3cm} $k := k+2$.
\item[8)] \hspace*{3cm} $\lambda_j := 0$.
\item[9)] \hspace*{3cm} $\lambda_{j+1} := \lambda_{j+1} - (2-\lambda_j)$.
\item[10)] \hspace*{2cm} else
\item[11)] \hspace*{3cm} $w_k := e_j$.
\item[12)] \hspace*{3cm} $k := k+1$.
\item[13)] \hspace*{3cm} $\lambda_j := \lambda_j - 1$.
\item[14)] \hspace*{2cm} end;
\item[15)] \hspace*{1cm} until $\lambda_j = 0$.
\item[16)] end;
\end{itemize}

{\bf Output:}\\[-3ex]
\begin{itemize}
\item Fusion frame $\{\cW_i\}_{i=1}^N$ with $\cW_i := \spann\{w_{i+km} : k = 0,\ldots,N\}$.
\end{itemize}
\vspace*{0.01cm}
\end{minipage}
}
\caption{\ap{The FFCRE} algorithm \ap{for constructing} a fusion frame with \ap{a desired fusion frame operator.}}
\label{fig:fusionframealgorithm}
\end{figure}

The principle for constructing the \ap{row} vectors \ap{$w_k$} which
generate the subspaces \ap{$\cW_i$} of the fusion frame is similar
to \ap{that in} {\sc (FFCIE)}, that is, again the
matrix $W$ which contains the vectors $w_k$, $k=1\ldots,Nm$ as rows
is filled up from top to bottom, row by row in such as way that the
$\ell_2$ norm of the rows is $1$, the $\ell_2$ norm of column
$j$ is $\lambda_j$, $j=1,\ldots,M$, and the columns are orthogonal.
The vectors $w_k$ are then assigned to subspaces in such a way that
the vectors assigned to each subspace form an orthonormal system.
However, here the task is more delicate since the $\lambda_j$'s are
not all integers. This forces the introduction of ($2 \times
2$)-submatrices of the type
\[
\left( \begin{array}{cc}
\sqrt{\frac{\lambda_j}{2}} & \sqrt{1-\frac{\lambda_j}{2}}\\
\sqrt{\frac{\lambda_j}{2}} & -\sqrt{1-\frac{\lambda_j}{2}}
\end{array} \right).
\]
\ap{These submatrices have orthogonal columns and unit norm ($\ell_2$ norm) rows and allow us to handle non-integer eigenvalues.}  This construction was introduced in \cite{CFMWZ09}
for constructing tight fusion frames.

\ap{Before we prove that {\sc (FFCRE)} indeed produces fusion frames
with desired operators we consider a special case, in which the
construction coincides with the construction of frames with
desired frame operators. Our intention is to highlight the
applicability of {\sc (FFCRE)}  to the construction of frames with
arbitrary frame operators and to present a simple example
that demonstrates how the algorithm works. A detail analysis of the
algorithm and the proof of its correctness are provided in
Subsection \ref{sec:MainAnalysis}}.


\subsubsection{A Special Case and An Example}

\ap{In the special case where $m=1$ a fusion frame reduces to a
frame and {\sc (FFCRE)} simplifies to an algorithm for constructing
frames with desired fusion frame operators. This algorithm, which
we refer to as {\sc (FCRE)}, is outlined in Figure
\ref{fig:framealgorithm}.}

\begin{figure}[h]
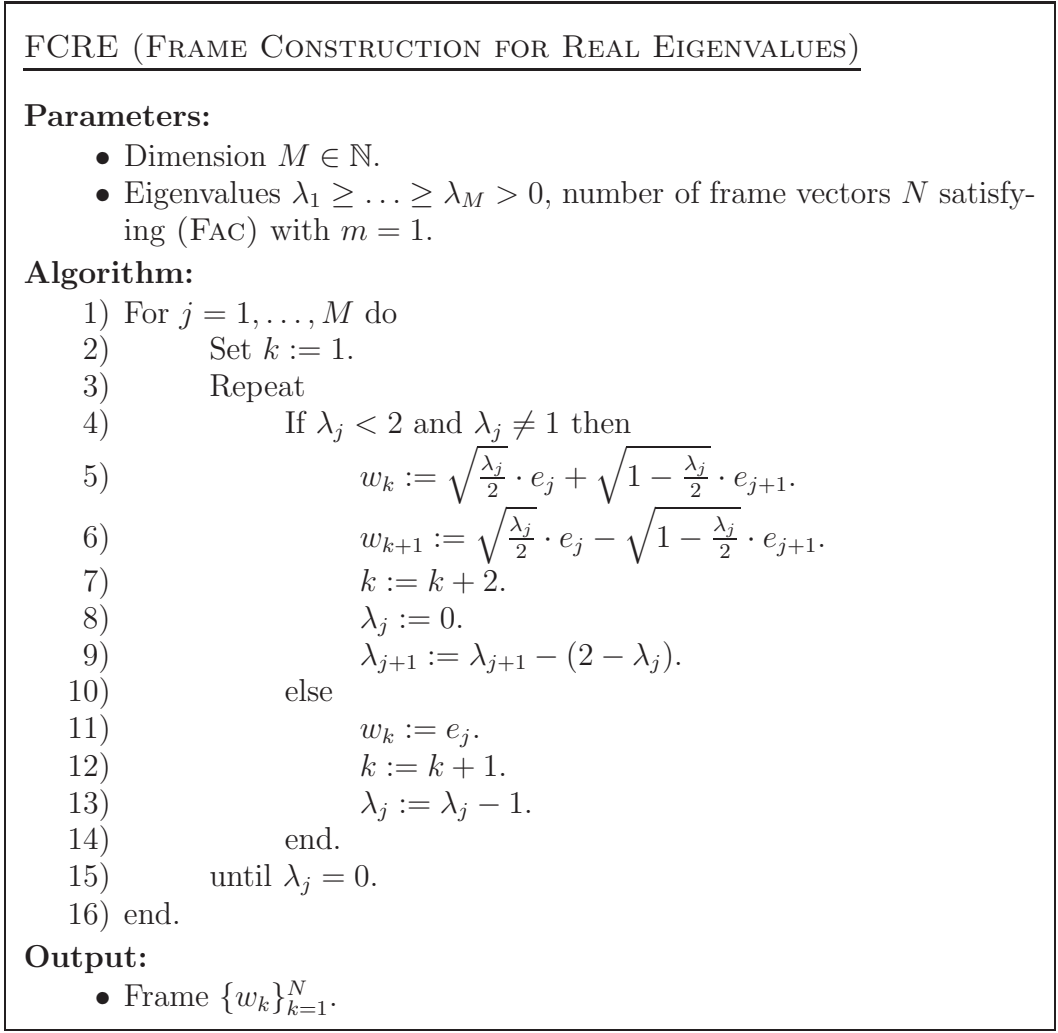

\centering
\framebox{
\begin{minipage}[h]{5.3in}
\vspace*{0.3cm}
{\sc \underline{FCRE (Frame Construction for Real Eigenvalues)}}

\vspace*{0.4cm}

{\bf Parameters:}\\[-3ex]
\begin{itemize}
\item Dimension $M \in \NN$.
\item Eigenvalues $\lambda_1 \ge \ldots \ge \lambda_M > 0$, number of frame vectors $N$
satisfying {\sc (Fac)} with $m=1$.
\end{itemize}

{\bf Algorithm:}\\[-3ex]
\begin{itemize}
\item[1)] For $j=1,\ldots,M$ do
\item[2)] \hspace*{1cm} Set $k := 1$.
\item[3)] \hspace*{1cm} Repeat
\item[4)] \hspace*{2cm} If $\lambda_j < 2$ and $\lambda_j \neq 1$ then
\item[5)] \hspace*{3cm} $w_k := \sqrt{\frac{\lambda_j}{2}} \cdot e_j + \sqrt{1-\frac{\lambda_j}{2}} \cdot e_{j+1}$.
\item[6)] \hspace*{3cm} $w_{k+1} := \sqrt{\frac{\lambda_j}{2}}\cdot e_j - \sqrt{1-\frac{\lambda_j}{2}} \cdot e_{j+1}$.
\item[7)] \hspace*{3cm} $k := k+2$.
\item[8)] \hspace*{3cm} $\lambda_j := 0$.
\item[9)] \hspace*{3cm} $\lambda_{j+1} := \lambda_{j+1} - (2-\lambda_j)$.
\item[10)] \hspace*{2cm} else
\item[11)] \hspace*{3cm} $w_k := e_j$.
\item[12)] \hspace*{3cm} $k := k+1$.
\item[13)] \hspace*{3cm} $\lambda_j := \lambda_j - 1$.
\item[14)] \hspace*{2cm} end.
\item[15)] \hspace*{1cm} until $\lambda_j = 0$.
\item[16)] end.
\end{itemize}

{\bf Output:}\\[-3ex]
\begin{itemize}
\item Frame $\{w_k\}_{k=1}^N$.
\end{itemize}
\vspace*{0.01cm}
\end{minipage}
}
\caption{The \ap{FCRE} algorithm \ap{for constructing a} frame with \ap{a desired} frame operator.}
\label{fig:framealgorithm}
\end{figure}

We now present \ap{an} example to demonstrate \ap{the application of
{\sc (FCRE)} as a special case of {\sc (FFCRE)}.}

\begin{example}
\ap{Let} $M=3$, $m=1$ (special case of frame construction), $N=8$,
and $\lambda_1=\frac{11}{4}$, $\lambda_2=\frac{11}{4}$, $\lambda_3=\frac{10}{4}$. Then, the algorithm
constructs the following \ap{matrix $W$}. Notice that indeed the $\ell_2$
norm of the rows is $1$, the $\ell_2$ norm of the column $j$ is $\lambda_j$, $j=1,\ldots,M$, and
the columns are orthogonal.
\[ W =
\begin{bmatrix}
 1 & 0&0 \\
 1&0&0\\
\sqrt{3/8}& \sqrt{5/8}&0\\
\sqrt{3/8} & -\sqrt{5/8} & 0\\
 0 & 1 & 0\\
 0 & \sqrt{1/4} & \sqrt{3/4}\\
 0 & \sqrt{1/4} & -\sqrt{3/4}\\
0&0&1
\end{bmatrix}
\]
The eigenvalues of the frame operator of the constructed frame
$\{w_{k,\cdot}\}_{k=1}^8$ are indeed $\frac{11}{4}$, $\frac{11}{4}$,
and $\frac{10}{4}$ as a simple computation shows. This also follows
from Theorem \ref{theo:givenFFOperator1} or Corollary
\ref{coro:givenFFOperator1} presented later in this subsection.
\end{example}

From now on we concentrate on the analysis of {\sc (FFCRE)}, keeping
in mind that our analysis also applies to {\sc (FCRE)} as a special
case.

\subsubsection{Feasibility Checks}

Before \ap{proving that} {\sc (FFCRE)} indeed produces a fusion
frame satisfying {\sc (FF1)} and {\sc (FF2)}, \ap{we investigate the
feasibility of the solution furnished by the algorithm.}

\begin{lemma}
\label{lemma:norm}
For all $k=1,\ldots,Nm$,
\[
\|w_k\|_2^2 = 1.
\]
\end{lemma}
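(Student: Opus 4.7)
The plan is to inspect the two branches of the Repeat loop in {\sc FFCRE} and verify that in each case the assigned vector has unit norm. The vectors $w_k$ are generated either at Line 11 or jointly at Lines 5--6, so it suffices to treat these two cases.

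First I would handle the easy ``else'' branch (Lines 10--14). Here $w_k := e_j$, so trivially $\|w_k\|_2^2 = 1$, with no side conditions to check. This takes care of all iterations in which $\lambda_j = 1$ or $\lambda_j \ge 2$ at the time the loop body is entered.

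Next I would treat the ``if'' branch (Lines 4--9). Here
\[
w_k = \sqrt{\tfrac{\lambda_j}{2}}\, e_j + \sqrt{1-\tfrac{\lambda_j}{2}}\, e_{j+1}, \qquad
w_{k+1} = \sqrt{\tfrac{\lambda_j}{2}}\, e_j - \sqrt{1-\tfrac{\lambda_j}{2}}\, e_{j+1},
\]
so that, provided the square roots are real,
\[
\|w_k\|_2^2 = \|w_{k+1}\|_2^2 = \tfrac{\lambda_j}{2} + \bigl(1-\tfrac{\lambda_j}{2}\bigr) = 1.
\]
The only thing to verify is therefore that at the moment the assignment is executed, $\lambda_j$ (which may differ from its input value due to prior updates on Lines 9 or 13) satisfies $0 \le \lambda_j \le 2$. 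The upper bound is exactly the test in Line 4. The lower bound I would obtain from the Repeat--until structure: the body is executed only when $\lambda_j \ne 0$ at the top of the loop (the loop exits at Line 15 as soon as $\lambda_j = 0$), and whenever the algorithm arrives at a new outer index $j$, the current value of $\lambda_j$ is the original eigenvalue minus at most a single subtraction $(2 - \lambda_{j-1})$ coming from Line 9 of the previous outer iteration. Since the hypothesis {\sc (Fac)} together with $\lambda_{j-1} \in (0,2)$ in the if-branch guarantees this correction is strictly less than the stored $\lambda_j$, the invariant $\lambda_j > 0$ inside the loop body is preserved.

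I expect the only mildly subtle step to be this loop invariant $\lambda_j > 0$; once that is in place the norm identity is a one-line algebraic check. The argument for the invariant is the only place where the overall structure of {\sc (FFCRE)} enters, and a careful induction on the outer loop index $j$ together with an inner induction on the Repeat iterations makes it routine. This bookkeeping will be reused implicitly in later lemmas (about column norms and orthogonality), so I would state it cleanly here.
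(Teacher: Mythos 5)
Your proof takes essentially the same route as the paper's: the paper's entire argument is that the claim ``follows immediately from Lines 5), 6), and 11)'' of {\sc (FFCRE)}, which is exactly your case split and the one-line computation $\frac{\lambda_j}{2} + \bigl(1-\frac{\lambda_j}{2}\bigr) = 1$. One caveat on the extra material: the paper deliberately does \emph{not} fold the well-definedness of the square roots into this lemma; it isolates that issue as problems {\sc (P1)} and {\sc (P2)} and resolves them in Proposition \ref{prop:P1P2} under the hypothesis $\lambda_j \ge 2$ for all $j$. Your justification of the invariant $\lambda_j > 0$ via {\sc (Fac)} alone is not quite right --- {\sc (Fac)} only fixes the sum $\sum_j \lambda_j = Nm$ and does not by itself prevent $\lambda_{j+1} - (2-\tilde{\lambda}_j)$ from going negative; what actually prevents it is $\lambda_{j+1}\ge 2 \ge 2-\tilde{\lambda}_j$. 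Since that side condition is not needed for the norm identity itself (and is handled separately in the paper), this does not invalidate your proof of the stated lemma, but the attribution should be corrected if you keep that discussion.
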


\begin{proof}
This follows immediately from Lines 5), 6), and 11) of {\sc (FFCRE)}.
\end{proof}

\ap{Denoting} the $\lambda_j$'s in Lines 4) -- 6) of {\sc (FFCRE)} by $\tilde{\lambda}_j$'s to distinguish them from the eigenvalues
$\lambda_j$, $j=1\ldots,M$, the only two problems which could occur while running {\sc (FFCRE)} are:
\bitem
\item[{\sc (P1)}] $\lambda_{j+1} - (2-\tilde{\lambda}_j) < 0$ in Line 9) for some $j=1,\ldots,M-1$,
\item[{\sc (P2)}] using $e_{M+1}$ in Lines 5) -- 9) when performing the step for $j=M$.
\eitem
The following result shows that these cannot happen.

\begin{proposition}
\label{prop:P1P2}
If $\lambda_j \ge 2$ for all $j=1\ldots,M$, then {\sc (P1)} and {\sc (P2)} cannot happen.

\end{proposition}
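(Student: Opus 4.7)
The plan is to introduce notation to track the state of each $\lambda_j$ throughout the execution of {\sc (FFCRE)} and then establish two invariants: a lower bound $\lambda_j^{\mathrm{start}} \geq 1$ from which (P1) is immediate, and a modular fractional-part identity from which (P2) follows. Write $\lambda_j^{\mathrm{start}}$ for the value of the variable $\lambda_j$ at the start of iteration $j$ of the outer loop, so that $\lambda_1^{\mathrm{start}} = \lambda_1$ and, for $j \geq 2$, either $\lambda_j^{\mathrm{start}} = \lambda_j$ or $\lambda_j^{\mathrm{start}} = \lambda_j - (2 - \tilde\lambda_{j-1})$ according to whether iteration $j-1$ terminates via Line 13 or via Lines 4--9. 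Put $\Delta_j := 0$ when iteration $j$ terminates via Line 13 and $\Delta_j := 2 - \tilde\lambda_j$ when it terminates via Lines 4--9, so that in all cases $\lambda_{j+1}^{\mathrm{start}} = \lambda_{j+1} - \Delta_j$. Note that Lines 4--9 execute in iteration $j$ precisely when $\lambda_j^{\mathrm{start}}$ is non-integer, and in that situation the repeated else-branch decrements leave $\tilde\lambda_j = 1 + (\lambda_j^{\mathrm{start}} - \lfloor \lambda_j^{\mathrm{start}}\rfloor)$ (this uses $\lambda_j^{\mathrm{start}} > 1$, which I shall verify), so $\Delta_j \in (0,1)$.

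The first step is to prove $\lambda_j^{\mathrm{start}} \geq 1$ for all $j \in \{1,\ldots,M\}$ by induction on $j$. The base case is the hypothesis $\lambda_1 \geq 2$. For the inductive step, the dichotomy above gives $\Delta_j = 0$ (integer case) or $\Delta_j < 1$ (non-integer case), so in either situation $\lambda_{j+1}^{\mathrm{start}} = \lambda_{j+1} - \Delta_j \geq 2 - 1 = 1$. This immediately rules out (P1): when Line 9 fires during iteration $j$, the new value of $\lambda_{j+1}$ computed there is exactly $\lambda_{j+1}^{\mathrm{start}}$, which is at least $1 > 0$.

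The second step is to rule out (P2) by showing that $\lambda_M^{\mathrm{start}}$ is a positive integer, so that only Line 13 fires in iteration $M$ and the basis vector $e_{M+1}$ is never invoked. Let $f_j$ denote the fractional part of the original eigenvalue $\lambda_j$ and let $g_j$ denote the fractional part of $\lambda_j^{\mathrm{start}}$. A short case analysis on whether $g_j = 0$ or $g_j > 0$, combined with $\lambda_{j+1}^{\mathrm{start}} = \lambda_{j+1} - \Delta_j$ and the explicit form of $\Delta_j$ derived above, yields in both cases the recursion
\[
g_{j+1} \equiv g_j + f_{j+1} \pmod{1}.
\]
Iterating from $g_1 = f_1$ gives $g_M \equiv \sum_{i=1}^M f_i \pmod 1$. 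By the factorization hypothesis {\sc (Fac)}, $\sum_{i=1}^M \lambda_i = Nm \in \NN$, so $\sum_{i=1}^M f_i$ is an integer and therefore $g_M = 0$. Combined with $\lambda_M^{\mathrm{start}} \geq 1$ from the first step, this forces $\lambda_M^{\mathrm{start}}$ to be a positive integer, which rules out (P2).

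The main obstacle is the bookkeeping: keeping track of how much of each eigenvalue is spent during its own iteration versus transferred to the next index through Line 9, and verifying that the non-integer and integer branches interact correctly across consecutive indices. Once the two invariants above are established the conclusion is essentially immediate. The hypothesis $\lambda_j \geq 2$ is used in exactly one place, namely to ensure that the lower bound $\lambda_j^{\mathrm{start}} \geq 1$ survives a transfer of up to almost one unit from iteration $j-1$; dropping it to $\lambda_j \geq 1$ or below would break the invariant and allow (P1) or (P2) to occur in general.
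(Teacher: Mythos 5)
Your proof is correct, and while the treatment of {\sc (P1)} is in the same spirit as the paper's (the paper simply observes $\lambda_{j+1}\ge 2\ge 2-\tilde\lambda_j$, using only $\tilde\lambda_j>0$, with no induction needed), your argument for {\sc (P2)} takes a genuinely different route. The paper rules out {\sc (P2)} globally: it lets $K$ be the number of rows of $W$ filled after the iteration $j=M-1$, uses the unit-norm property of the rows to write $K=\sum_{k=1}^{K}\sum_{j=1}^{M-1}w_{kj}^2$, and splits into two cases according to whether the last iteration spilled terminal points into column $M$, concluding in each case that the residual mass $\lambda_M$ or $\lambda_M-(2-\tilde\lambda_{M-1})$ equals $Nm$ minus an integer. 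You instead track the algorithm's scalar state locally, proving the invariant $\lambda_j^{\mathrm{start}}\ge 1$ (which sharpens the paper's $\ge 0$ bound and pins down $\Delta_j=1-g_j$ exactly) and then propagating the fractional-part recursion $g_{j+1}\equiv g_j+f_{j+1}\pmod 1$ down to $j=M$, where {\sc (Fac)} forces $g_M=0$. Both arguments hinge on {\sc (Fac)} in the same way, but yours avoids any reference to the matrix $W$ and its row norms, is uniform in $j$ rather than a two-case analysis at the last column, and as a byproduct identifies exactly where the hypothesis $\lambda_j\ge 2$ enters (to absorb a transfer of strictly less than one unit); the paper's version has the advantage of reusing quantities ($K$, the column masses) that also appear in its subsequent analysis of $N(j)$. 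I see no gap in your argument.
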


\begin{proof}
{\sc (P1)}. Since $\lambda_j \ge 2$ for all $j=1\ldots,M$, we have
\[
\lambda_{j+1} \ge 2 \ge 2 - \tilde{\lambda}_{j} \quad \mbox{for all }j=1,\ldots,M-1.
\]
{\sc (P2)}. Suppose the algorithm is executed until Line 16) with $j=M-1$. Let $K+1$ denote the value which $k$
has reached at this point, and denote the coefficients of the vectors $w_k$ by $w_k =(w_{k1}, \ldots, w_{kM})$.
\ap{This} means that so far we have constructed $w_{kj}$ for $k=1, \ldots, K$, $j=1\ldots,M-1$. Then, by construction,
\beq \label{eq:feasible1}
\sum_{k=1}^{K} w_{kj}^2 = \lambda_j \quad \mbox{for all } 1 \le j \le M-1.
\eeq
\ap{We have} to distinguish between two cases:

{\em Case 1.} $w_{K-2,M} = 0$ and $w_{K-1,M} = 0$. Then, by \eqref{eq:feasible1} and Lemma \ref{lemma:norm},
\[
\sum_{j=1}^{M-1} \lambda_j
= \sum_{j=1}^{M-1}\sum_{k=1}^{K} w_{kj}^2
= \sum_{k=1}^{K} \sum_{j=1}^{M-1} w_{kj}^2
= \sum_{k=1}^{K} 1
= K.
\]
Since
\[
\sum_{j=1}^M \lambda_j = \lambda_M + \sum_{j=1}^{M-1} \lambda_j = \lambda_M + K\]
is an integer, it follows that $\lambda_M$ is an integer as well. Hence during the step
$j=M$ only the Block 11) -- 14)  as opposed to the Block 5) -- 9) will be executed.
Thus {\sc (P2)} does not happen.

{\em Case 2.} $w_{K-2,M} = \sqrt{1-\frac{\tilde{\lambda}_{M-1}}{2}}$ and $w_{K-1,M} = -\sqrt{1-\frac{\tilde{\lambda}_{M-1}}{2}}$.
In this case,
\[
\sum_{j=1}^{M-1} \lambda_j + (2- \tilde{\lambda}_{M-1})
= \sum_{j=1}^{M-1}\sum_{k=1}^{K} w_{kj}^2
= \sum_{k=1}^{K} \sum_{j=1}^{M-1} w_{kj}^2
= \sum_{k=1}^{K} 1
= K,
\]
an integer. Since $\sum_{j=1}^M \lambda_j$ is an integer as well, so is
\[
\sum_{j=1}^M \lambda_j - \left( \sum_{j=1}^{M-1} \lambda_j + (2-\tilde{\lambda}_{M-1})\right)
= \lambda_M - (2-\tilde{\lambda}_{M-1}).
\]
Hence, as before, in the step $j=M$ only the Block 11) -- 14)  as opposed to the Block 5) -- 9) will be
executed; here $ \lambda_M - (2-\tilde{\lambda}_{M-1})$ times.
Thus, in this situation, {\sc (P2)} does not occur.
\end{proof}

\subsubsection{Terminology and Lemmata}

In preparation for \ap{a detailed} analysis of \ap{{\sc
(FFCRE)}}, \ap{which is presented in Subsection
\ref{sec:MainAnalysis}}, we need to establish some terminology and a
few results.

\begin{definition}
An entry of a vector $w_k$, $k \in \{1,\ldots,Nm\}$ of the form $\pm \sqrt{1-\frac{\tilde{\lambda}_j}{2}}$
(entered in Line 5) or 6) of {\sc (FFCRE)}) will be termed a {\em terminal point}. An
{\em initial point} will be an entry of the form \ap{$\pm \sqrt{\tilde{\lambda}_j/2}$}
(entered in Line 5) or 6)).
\end{definition}

Considering the matrix $W \in \RR^{Nm \times M}$ with the vectors $w_1,\ldots,w_{Nm}$ as
rows, the initial points start non-zero entries in a row with more than one non-zero
entry, whereas the terminal points end such non-zero entries. It is obvious from algorithm
{\sc (FFCRE)} that column $n$ of $W$ has no initial points if and only if
\[
\sum_{j=1}^n \lambda_j \mbox{ is an integer},
\]
and it has no terminal points if and only if
\[
\sum_{j=1}^{n-1}\lambda_j \mbox{ is an integer}.
\]

\ap{Let $N(j)$ denote} the number of non-zero terms in each column
$j$, $j = 1,\ldots,M$ of the matrix $W$, that is, \ap{let} $N(j)$
denote\  the number of non-zero entries of the vector $w_{\cdot,j}$.
The following proposition determines exactly the value of $N(j)$
\ap{depending} on the occurrence of initial and/or terminal points.
\ap{We remind} the reader of the definition of $\tilde{\lambda}_j$
right before Proposition \ref{prop:P1P2}.

\begin{lemma}\label{lemma:Nj}
The following conditions hold for the previously defined values $N(j)$, $j = 1,\ldots,M$.
\bitem
\item[(i)] $N(j) = \lambda_j$, if $w_{\cdot,j}$ contains no initial or terminal points.
\item[(ii)] $N(j) = \lfloor \lambda_j \rfloor +1$, if $w_{\cdot,j}$ contains terminal, but no initial points.
\item[(iii)] $N(j) = \lfloor \lambda_j \rfloor +2$, if $w_{\cdot,j}$ contains initial, but no terminal points.
\item[(iv)] If $w_{\cdot,j}$ contains both initial and terminal points, then
\bitem
\item[(a)] if $\tilde{\lambda}_j \ge \tilde{\lambda}_{j-1}$, then $N(j) = \lfloor \lambda_j \rfloor +2$,
\item[(b)] if $\tilde{\lambda}_j < \tilde{\lambda}_{j-1}$ then $N(j) = \lfloor \lambda_j \rfloor +3$.
\eitem
\item[(v)] If $\lambda_{j_0}$ is the first non-integer value, then $N(j_0) = \lfloor \lambda_{j_0} \rfloor +2$.
\item[(vi)] If $\lambda_{j_1}$ is the last non-integer value, then $N(j_1) = \lfloor \lambda_{j_1} \rfloor +1$.
\eitem
\end{lemma}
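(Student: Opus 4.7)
The plan is to count the nonzero entries in column $j$ of the matrix $W\in\mathbb{R}^{Nm\times M}$ whose rows are $w_1,\dots,w_{Nm}$, by classifying each entry according to where it was written by the algorithm. By inspection of {\sc (FFCRE)}, every nonzero entry in column $j$ arises in exactly one of three ways: as a \emph{full entry} equal to $1$, written in Line 11 during an else-block step of outer iteration $j$; as an \emph{initial point} of value $\sqrt{\tilde\lambda_j/2}$, written in Lines 5--6 during the (at most one) if-block step of outer iteration $j$; or as a \emph{terminal point} of value $\pm\sqrt{1-\tilde\lambda_{j-1}/2}$, written in Lines 5--6 during the if-block step of outer iteration $j-1$. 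Denote the respective counts by $F_j$, $I_j$, $T_j$, where $I_j,T_j\in\{0,2\}$. Then $N(j)=F_j+I_j+T_j$, and since the algorithm keeps the squared $\ell_2$-norm of column $j$ equal to the original $\lambda_j$ (an identity admitting a short inductive verification using Line 9), we obtain the auxiliary equation
\[
\lambda_j \;=\; F_j \;+\; I_j\,\tfrac{\tilde\lambda_j}{2} \;+\; T_j\,\bigl(1-\tfrac{\tilde\lambda_{j-1}}{2}\bigr).
\]

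With this setup the proof reduces to specifying $(I_j,T_j)$ in each case, solving for $F_j$, and reading off $N(j)$. Case (i) is immediate: $I_j=T_j=0$ gives $F_j=\lambda_j\in\mathbb{N}$ and $N(j)=\lambda_j$. For case (iii), iteration $j$ begins with the unmodified $\lambda_j$ and the residue $\tilde\lambda_j$ is the first value in $(0,1)\cup(1,2)$ reached by successively subtracting $1$, so $F_j$ and hence $N(j)$ is pinned down in terms of $\lfloor\lambda_j\rfloor$ plus the $+2$ from the initial-point pair. Case (v) reduces to (iii) since $\lambda_1,\dots,\lambda_{j_0-1}\in\mathbb{N}$ forces iterations $1,\dots,j_0-1$ to consist entirely of else-block steps, and hence column $j_0$ inherits no terminal points. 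The companion cases (ii) and (vi) proceed symmetrically: in (ii) only the else-block fires in iteration $j$, so the modified starting value $\lambda_j-(2-\tilde\lambda_{j-1})$ must be a non-negative integer, which forces a specific relation between $\tilde\lambda_{j-1}$ and $\{\lambda_j\}$ and yields $F_j=\lfloor\lambda_j\rfloor-1$, hence $N(j)=\lfloor\lambda_j\rfloor+1$; case (vi) invokes in addition the boundary condition that no if-block may fire in iteration $M$ (Proposition~\ref{prop:P1P2}), which forces column $j_1$ to have only terminal points.

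Case (iv) is where I expect the main obstacle, since both residues $\tilde\lambda_{j-1}$ and $\tilde\lambda_j$ appear in the column-norm identity and produce $F_j=\lambda_j+\tilde\lambda_{j-1}-\tilde\lambda_j-2$. Both residues lie in $(0,1)\cup(1,2)$, so depending on their relative size the fractional carries either cancel or combine to cross an integer boundary, shifting the integer part of $F_j$ by one; this dichotomy is precisely captured by $\tilde\lambda_j\ge\tilde\lambda_{j-1}$ versus $\tilde\lambda_j<\tilde\lambda_{j-1}$ and produces the claimed $\lfloor\lambda_j\rfloor+2$ and $\lfloor\lambda_j\rfloor+3$. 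The delicate step is verifying that the sign of $\tilde\lambda_j-\tilde\lambda_{j-1}$ correctly predicts the integer part of $F_j$; I plan to argue by a direct comparison of the fractional parts of $\lambda_j$ and of the two residues, invoking Proposition~\ref{prop:P1P2} to ensure that all intermediate quantities remain non-negative so that the case analysis does not degenerate.
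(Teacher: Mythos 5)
Your decomposition $N(j)=F_j+I_j+T_j$ together with the column-norm identity $\lambda_j = F_j + I_j\,\tilde{\lambda}_j/2 + T_j\,(1-\tilde{\lambda}_{j-1}/2)$ is exactly the paper's argument: the paper counts the number $n_j$ of unit entries in column $j$, writes $N(j)=n_j+2$ or $n_j+4$ according to which of the initial/terminal pairs are present, and reads off $\lfloor\lambda_j\rfloor$ from $\lambda_j = n_j + \tilde{\lambda}_j$, resp.\ $n_j + (2-\tilde{\lambda}_{j-1})$, resp.\ $n_j + 2 + (\tilde{\lambda}_{j-1}-\tilde{\lambda}_j)$, so the ``delicate step'' you defer in case (iv) is settled there by the same one-line floor computation on the sign of $\tilde{\lambda}_{j-1}-\tilde{\lambda}_j$. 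Your approach is correct and essentially identical to the paper's.
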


\begin{proof}
(i). This is obvious, since in this case only the Block 11) -- 14) is performed as opposed to the
Block 5) -- 9).\\
(ii). Letting $n_j$ denote the number of ones in the vector $w_{\cdot,j}$, it follows that
$N(j) = n_j + 2$. Hence
\[
\lambda_j = n_j + (2-\tilde{\lambda}_{j-1}) = n_j + 1 + (1-\tilde{\lambda}_{j-1}) \quad \mbox{with }
0 < 1-\tilde{\lambda}_{j-1} < 1.
\]
This implies $\lfloor \lambda_j \rfloor = n_j+1$, and thus
\[
\lfloor \lambda_j \rfloor +1 = n_j+2 = N(j).
\]
(ii). Letting $n_j$ denote the number of ones in the vector $w_{\cdot,j}$, it follows that
$N(j) = n_j + 2$. Since the entries of $w_{\cdot,j}$ are $n_j$ times a $1$ as well as
the values $\pm \sqrt{1-\frac{\tilde{\lambda}_{j-1}}{2}}$,
\[
\lambda_j = n_j + (2-\tilde{\lambda}_{j-1}) = n_j + 1 + (1-\tilde{\lambda}_{j-1}) \quad \mbox{with }
0 < 1-\tilde{\lambda}_{j-1} < 1.
\]
This implies $\lfloor \lambda_j \rfloor = n_j+1$, and thus
\[
N(j) = n_j+2 = \lfloor \lambda_j \rfloor +1.
\]
(iii). Now the non-zero entries of the vector $w_{\cdot,j}$ are $\pm \sqrt{\frac{\tilde{\lambda}_j}{2}}$
as well as $n_j$, say, entries $1$. Hence $N(j) = n_j + 2$, and
\[
\lambda_j = n_j + \tilde{\lambda}_{j} \quad \mbox{with } 0 < \tilde{\lambda}_{j} < 1.
\]
This implies $\lfloor \lambda_j \rfloor = n_j$, and thus
\[
N(j) =  n_j+2 = \lfloor \lambda_j \rfloor +2.
\]
(iv). The vector $w_{\cdot,j}$ contains as non-zero entries, the initial points $\pm \sqrt{\frac{\tilde{\lambda}_j}{2}}$
and the terminal points $\pm \sqrt{1-\frac{\tilde{\lambda}_{j-1}}{2}}$ as well as, say, $n_j$ entries $1$, hence
$N(j) = kn_j +4$. Thus
\[
\lambda_j = n_j + \tilde{\lambda}_{j} + (2-\tilde{\lambda}_{j-1})
= n_j + 2 + (\tilde{\lambda}_{j-1}-\tilde{\lambda}_{j}).
\]
If $\tilde{\lambda}_{j-1}-\tilde{\lambda}_{j}\ge 0$, then $\lfloor \lambda_j \rfloor = n_j + 2$, which implies
\[
N(j) = n_j +4 = \lfloor \lambda_j \rfloor +2.
\]
If $\tilde{\lambda}_{j-1}-\tilde{\lambda}_{j}< 0$, then $\lfloor \lambda_j \rfloor = n_j + 1$, which implies
\[
N(j) = n_j +4 = \lfloor \lambda_j \rfloor +3.
\]
(v) \ap{and} (vi). These are direct consequences from the previous conditions.
\end{proof}

\ap{The following lemma shows an interesting relation between
consecutive values of $N(j)$ as $j$ progresses. However, we note
that} only the previous lemma is required for the proofs of the main
theorems \ap{that will be presented in Subsection
\ref{sec:MainAnalysis}.}

\begin{lemma}
For any $j \in \{1,\ldots,M-1\}$, the following conditions hold for the previously
defined values $N(j)$ and $N(j+1)$ supposing that they are not integers.
\bitem
\item[(i)] If $w_{\cdot,j}$ contains no initial or terminal points, then $N(j) \ge N(j+1)-1$.
\item[(ii)] If $w_{\cdot,j}$ contains initial, but no terminal points, then
\bitem
\item[(a)] if $\lambda_j + \lambda_{j+1}$ is an integer, then $N(j) \ge N(j+1)+1$,
\item[(b)] if $\lambda_j + \lambda_{j-1}$ is not an integer, then $N(j) \ge N(j+1)-1$.
\eitem
\item[(iii)] If $w_{\cdot,j}$ contains both initial and terminal points, then $N(j) \ge N(j+1)-1$.
\eitem
\end{lemma}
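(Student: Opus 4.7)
The approach is a case analysis that applies the preceding Lemma~\ref{lemma:Nj} simultaneously to columns $j$ and $j+1$, exploiting the structural link between them induced by {\sc (FFCRE)}. The key observation is that column $j+1$ of $W$ contains a terminal point precisely when column $j$ contains an initial point, and column $j+1$ contains an initial point precisely when $\sum_{i=1}^{j+1}\lambda_i$ fails to be an integer. Combined with the monotonicity $\lambda_j\ge\lambda_{j+1}$, each assertion will reduce to an elementary inequality between $\lfloor\lambda_j\rfloor$ and $\lfloor\lambda_{j+1}\rfloor$.

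For (i), since $w_{\cdot,j}$ has no initial or terminal points, $\lambda_j$ must be an integer and column $j+1$ inherits no terminal point. If $\lambda_{j+1}$ is also an integer, then $N(j+1)=\lambda_{j+1}$ by Lemma~\ref{lemma:Nj}(i), so $N(j)-N(j+1)=\lambda_j-\lambda_{j+1}\ge 0$. Otherwise $w_{\cdot,j+1}$ has initial but no terminal points and Lemma~\ref{lemma:Nj}(iii) gives $N(j+1)=\lfloor\lambda_{j+1}\rfloor+2$; since $\lambda_j\in\NN$ with $\lambda_j\ge\lambda_{j+1}$, we obtain $\lambda_j\ge\lfloor\lambda_{j+1}\rfloor+1$, yielding $N(j)-N(j+1)\ge -1$.

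For (ii), $N(j)=\lfloor\lambda_j\rfloor+2$ by Lemma~\ref{lemma:Nj}(iii), and the absence of terminals in column $j$ forces $\sum_{i=1}^{j-1}\lambda_i\in\NN$, so $\sum_{i=1}^{j+1}\lambda_i\in\NN$ if and only if $\lambda_j+\lambda_{j+1}\in\NN$. In subcase (a), column $j+1$ carries a terminal but no initial, Lemma~\ref{lemma:Nj}(ii) gives $N(j+1)=\lfloor\lambda_{j+1}\rfloor+1$, and $N(j)-N(j+1)\ge 1$ follows from $\lfloor\lambda_j\rfloor\ge\lfloor\lambda_{j+1}\rfloor$. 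In subcase (b), column $j+1$ carries both an initial and a terminal point; according to whether $\tilde\lambda_{j+1}\ge\tilde\lambda_j$ or $\tilde\lambda_{j+1}<\tilde\lambda_j$, Lemma~\ref{lemma:Nj}(iv) gives $N(j+1)=\lfloor\lambda_{j+1}\rfloor+2$ or $\lfloor\lambda_{j+1}\rfloor+3$, and in either situation the bound $N(j)-N(j+1)\ge -1$ again reduces to $\lfloor\lambda_j\rfloor\ge\lfloor\lambda_{j+1}\rfloor$. Case (iii) is handled in the same spirit: $N(j)$ is $\lfloor\lambda_j\rfloor+2$ or $\lfloor\lambda_j\rfloor+3$ by Lemma~\ref{lemma:Nj}(iv), while column $j+1$ inherits a terminal and carries at most one initial, so $N(j+1)\le\lfloor\lambda_{j+1}\rfloor+3$, and $N(j)\ge N(j+1)-1$ collapses once more to $\lfloor\lambda_j\rfloor\ge\lfloor\lambda_{j+1}\rfloor$.

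The main obstacle is bookkeeping rather than any deep step. In cases (ii)(b) and (iii) one must correctly propagate the fractional parts so that the right clause of Lemma~\ref{lemma:Nj} applies to column $j+1$; in particular, the relationship between $\tilde\lambda_{j+1}$ and the fractional updates produced by Line~9) of {\sc (FFCRE)} determines whether $\tilde\lambda_{j+1}\ge\tilde\lambda_j$, and hence which formula from Lemma~\ref{lemma:Nj}(iv) is in force for column $j+1$. Once this is tracked carefully, the required inequality in every subcase is nothing more than a monotonicity statement about $\lfloor\lambda_j\rfloor$ shifted by a small integer constant.
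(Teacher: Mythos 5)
Your proof is correct and follows essentially the same route as the paper: apply Lemma \ref{lemma:Nj} to columns $j$ and $j+1$, propagate the initial/terminal-point structure from column $j$ to column $j+1$ (an initial point in column $j$ forces a terminal point in column $j+1$, etc.), and reduce each claimed inequality to $\lfloor\lambda_j\rfloor \ge \lfloor\lambda_{j+1}\rfloor$. In case (i) you are in fact more faithful to the stated hypothesis than the paper's own argument, which silently uses the terminal-but-no-initial formula $N(j)=\lfloor\lambda_j\rfloor+1$ there, but both readings lead to the same bound.
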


\begin{proof} Recall that we have $\lambda_j\geq\lambda_{j+1}$.\\
(i). Since $w_{\cdot,j}$ contains no initial points and $\lambda_{j+1}$ is not an integer,
the vector $w_{\cdot,j+1}$ contains initial, but no terminal points. Thus, by Lemma \ref{lemma:Nj},
\[
N(j)=\lfloor \lambda_j \rfloor +1 \geq\lfloor \lambda_{j+1} \rfloor +2-1=N(j+1)-1.
\]
(ii). By Lemma \ref{lemma:Nj}, $N(j)=\lfloor \lambda_j \rfloor +2$. Also $w_{\cdot,j}$
contains initial points, hence the vector $w_{\cdot,j+1}$ contains terminal points.\\
(a). Since $\lambda_j+\lambda_{j+1}$ is an integer and $w_{\cdot,j}$ does not contain
any terminal points, the vector $w_{\cdot,j+1}$ does not contain initial points.
This implies $N(j+1)=\lfloor \lambda_{j+1} \rfloor +1$.\\
(b). Since $\lambda_j+\lambda_{j+1}$ is not an integer and $w_{\cdot,j}$ does not contain
any terminal points, the vector $w_{\cdot,j+1}$ does contain initial points.
This implies $N(j+1)\leq\lfloor \lambda_{j+1}\rfloor +3 $.\\
(iii). By Lemma \ref{lemma:Nj}, $N(j)\geq \lfloor \lambda_j \rfloor +2$ and the vector $w_{\cdot,j+1}$
can not contain more than $\lfloor \lambda_{j+1} \rfloor+3$ non-zero entries.
\end{proof}

\subsubsection{Main Results: Analysis of {\sc (FFCRE)}}\label{sec:MainAnalysis}

We now present the main results concerning \ap{{\sc (FFCRE)}. We
first} show that the algorithm indeed delivers the correct fusion
frame, i.e., a fusion \ap{frame with} the prescribed fusion frame
operator. From this result, we deduce that in certain
cases a fusion frame can be turned into a tight fusion frame by
careful adding of new subsets (compare also \ap{with} Corollary
\ref{coro:waterfilling}).

\begin{theorem} \label{theo:givenFFOperator1}
Suppose the real values $\lambda_1\geq\cdots\geq\lambda_M$, $N \in \NN$, and $m \in \NN$
satisfy {\sc (Fac)} as well as the following conditions.
\bitem
\item[(i)] $\lambda_M \ge 2$.
\item[(ii)] If $j_0$ is the first integer in $\{1,\ldots,M\}$, for which $\lambda_{j_0}$ is not an integer, then
$\lfloor \lambda_{j_0} \rfloor \le N-3$.
\eitem
Then the fusion frame $\{\cW_i\}_{i=1}^N$ constructed by {\sc (FFCRE)} fulfills {\sc (FF1)} and {\sc (FF2)}.
\end{theorem}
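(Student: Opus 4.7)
My plan is to split the verification of FFCRE's output into four stages: (a) the algorithm terminates and produces $Nm$ unit vectors; (b) the associated matrix $W\in\mathbb{R}^{Nm\times M}$ with rows $w_1,\ldots,w_{Nm}$ satisfies $W^TW=\mathrm{diag}(\lambda_1,\ldots,\lambda_M)$; (c) the $m$ vectors assigned to each subspace $\mathcal{W}_i$ form an orthonormal set; and (d) the fusion frame operator equals $W^TW$ and hence has the prescribed spectrum.

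Stages (a), (b) and (d) are essentially bookkeeping. Stage (a) is Proposition \ref{prop:P1P2}, which rules out both obstructions (P1) and (P2) under condition (i), $\lambda_M\geq 2$. For stage (b), I would track the algorithm column by column: each execution of the integer block (lines 11--14) adds $1$ to $\|W_{\cdot,j}\|_2^2$, while each execution of the non-integer block (lines 5--9) distributes $\lambda_j$ across columns $j$ and $j{+}1$, with the subsequent update to $\lambda_{j+1}$ absorbing the terminal contribution; together these account for the column norms. Orthogonality of distinct columns reduces to the adjacent case $(j,j+1)$, in which the two rows of the $2\times 2$ block with $\pm$ sign pattern in lines 5--6 cancel each other's cross contributions. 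Stage (d) is then routine once (c) is known: each $P_i$ decomposes as $\sum_k w_{i+kN}w_{i+kN}^T$ on its orthonormal basis, and summing over $i$ telescopes to $S=\sum_{\ell=1}^{Nm} w_\ell w_\ell^T = W^TW$.

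The heart of the proof is stage (c), and this is where condition (ii) is used. The structural observation I would exploit is that the non-zero entries of column $j$ of $W$ occupy $N(j)$ \emph{consecutive} rows. Partitioning rows by residue class modulo $N$ (so that rows with index $\equiv i \pmod N$ go to $\mathcal{W}_i$, giving exactly $m$ rows per subspace), any two distinct rows in the same subspace have indices differing by at least $N$, while any two rows sharing a non-zero entry in column $j$ differ by at most $N(j)-1$. Hence if $N(j)\leq N$ for every $j$, then any two distinct rows in the same $\mathcal{W}_i$ have disjoint supports, so they are orthogonal by stage (b) and of unit norm by Lemma \ref{lemma:norm}; together this gives both $\dim\mathcal{W}_i=m$ and the orthonormality needed in stage (d). To establish $N(j)\leq N$ I would split on $j$: for $j\geq j_0$, Lemma \ref{lemma:Nj} yields $N(j)\leq\lfloor\lambda_j\rfloor+3\leq\lfloor\lambda_{j_0}\rfloor+3\leq N$ by monotonicity of $\{\lambda_j\}$ and condition (ii); for $j<j_0$, all $\lambda_k$ with $k\leq j$ are integers, no initial or terminal points appear in column $j$, and Lemma \ref{lemma:Nj}(i) gives $N(j)=\lambda_j$, which is controlled by the same ceiling $\lambda_1\leq N$ already required in the purely integer setting treated immediately before this theorem.

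The main difficulty is the uniform bookkeeping needed in stage (c): although each individual case in Lemma \ref{lemma:Nj} is elementary, ensuring the bound $N(j)\leq N$ holds across every possible combination of initial/terminal point configurations requires threading condition (ii) together with the monotonicity of the $\lambda_j$'s and, tacitly, the integer-case constraint $\lambda_1\leq N$ for indices $j<j_0$. Once this uniform bound is secured, the four stages above combine immediately to yield (FF1) and (FF2).
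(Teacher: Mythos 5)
Your proposal follows essentially the same route as the paper's proof: reduce (FF1) and (FF2) to showing that rows of $W$ assigned to the same subspace are disjointly supported, and obtain this from the fact that the non-zero entries of each column are consecutive together with the bound $N(j)\le N$ supplied by Lemma \ref{lemma:Nj} and condition (ii). You are in fact somewhat more careful than the paper at two points it leaves implicit --- the verification that $W^TW=\diag(\lambda_1,\ldots,\lambda_M)$, and the observation that for the purely integer columns $j<j_0$ condition (ii) gives no control and one tacitly needs $\lambda_j\le N$, as in the integer-case proposition --- but the underlying argument is the same.
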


\begin{proof}
If the set of vectors
\[
\{w_{i+km} : k = 0,\ldots,N-1\}
\]
is pairwise orthogonal for each $i = 1,\ldots,N$, then {\sc (FF1)} and {\sc (FF2)} follow automatically. Fix $i \in \{1,\ldots,N\}$. By construction, it \ap{is sufficient} to show that, for each $0 \le k \le N-2$, the vectors $w_{i+km}$ and $w_{i+(k+1)m}$ are disjointly supported. We distinguish between the following two cases:

{\it Case 1.} The vector $w_{i+km}$ is a unit vector, $e_n$, say. By (ii) and Lemma \ref{lemma:Nj},
$w_{\cdot,n}$ does not have more than $N$ non-zero elements. When defining the vector $w_{i+(k+1)m}$,
already $N-1$ vectors $w_\ell$ have been defined before its construction. Therefore this definition
takes place in a different step of the loop in Line 1). Hence $w_{i+(k+1)m,j} = 0$ for all $j=1,\ldots,n$.
This prove the claim in this case.

{\it Case 2.} The vector $w_{i+km}$ has two non-zero entries, namely an initial and a terminal point,
where the terminal point is at the $n$th position, say. Again, by (ii) and Lemma \ref{lemma:Nj},
$w_{\cdot,n}$ does not have more than $N$ non-zero elements. Hence, concluding as before,
$w_{i+(k+1)m,j} = 0$ for all $j=1,\ldots,n$. This prove the claim also in this case.
\end{proof}

Certainly, this theorem also holds in the special case of frames, i.e., $1$-dimensional
subspaces.

\begin{corollary} \label{coro:givenFFOperator1}
Suppose the real values $\lambda_1\geq\cdots\geq\lambda_M$ and $N \in \NN$
satisfy
\[
\sum_{j=1}^M \lambda_j = N
\]
as well as the following conditions.
\bitem
\item[(i)] $\lambda_M \ge 2$.
\item[(ii)] If $j_0$ is the first integer in $\{1,\ldots,M\}$, for which $\lambda_{j_0}$ is not an integer, then
$\lfloor \lambda_{j_0} \rfloor \le N-3$.
\eitem
Then the eigenvalues of the frame operator of the frame $\{w_k\}_{k=1}^N$ constructed by {\sc (FFCRE)}
are $\{\lambda_j\}_{j=1}^M$.
\end{corollary}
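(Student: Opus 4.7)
The plan is to deduce the corollary as a direct specialization of Theorem \ref{theo:givenFFOperator1} at $m=1$. With this choice the factorization condition {\sc (Fac)}, namely $\sum_{j=1}^M \lambda_j = Nm$, collapses to $\sum_{j=1}^M \lambda_j = N$, which is exactly the arithmetic hypothesis of the corollary. The two remaining assumptions (i) $\lambda_M \ge 2$ and (ii) $\lfloor \lambda_{j_0} \rfloor \le N-3$ are literally the same as in the theorem, so the hypotheses of Theorem \ref{theo:givenFFOperator1} are met.

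Next I would identify the output of {\sc (FFCRE)} in the $m=1$ case with the frame in the statement. The algorithm produces $Nm = N$ vectors $w_1, \ldots, w_N$, each of unit norm by Lemma \ref{lemma:norm}, and the subspaces are the one-dimensional lines $\cW_i = \spann\{w_i\}$. Hence the orthogonal projection onto $\cW_i$ acts as $P_i f = \langle f, w_i\rangle w_i$, so the fusion frame operator becomes
\[
S f \;=\; \sum_{i=1}^{N} P_i f \;=\; \sum_{i=1}^{N} \langle f, w_i\rangle \, w_i,
\]
which is precisely the classical frame operator of the sequence $\{w_k\}_{k=1}^{N}$.

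Finally, I would invoke Theorem \ref{theo:givenFFOperator1} to conclude that under (i) and (ii) the fusion frame operator $S$ has eigenvalues $\{\lambda_j\}_{j=1}^M$; via the identification in the previous display this is exactly the assertion that the frame operator of $\{w_k\}_{k=1}^N$ has eigenvalues $\{\lambda_j\}_{j=1}^M$, as claimed. I do not expect any genuine obstacle here: the substantive content of the result lies entirely in Theorem \ref{theo:givenFFOperator1}, and the corollary is essentially a re-reading of that theorem in the degenerate setting of one-dimensional subspaces, in which case {\sc (FF1)} is automatic (each $w_k$ is a unit vector) and the pairwise-orthogonality step inside the theorem's proof becomes vacuous.
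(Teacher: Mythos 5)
Your proof is correct and follows exactly the paper's route: the paper also deduces the corollary by setting $m=1$ in Theorem \ref{theo:givenFFOperator1}, and your extra step identifying the fusion frame operator of the one-dimensional subspaces $\spann\{w_i\}$ (with unit-norm generators) with the classical frame operator $f \mapsto \sum_i \langle f, w_i\rangle w_i$ is a harmless elaboration of what the paper leaves implicit.
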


\begin{proof}
This result follows directly from Theorem \ref{theo:givenFFOperator1} by choosing $m=1$.
\end{proof}

Theorem \ref{theo:givenFFOperator1} is now applied to generate a tight fusion frame from a given fusion frame, satisfying some mild conditions.

\begin{theorem} \label{theo:givenFFOperator2}
Let $\{\cW_i\}_{i=1}^N$ be a fusion frame for $\bR^M$ with $\dim \cW_i=m<M$ for all
$i=1,\ldots ,N$, and let $S$ be the associated fusion frame operator with eigenvalues
$\lambda_1 \ge \ldots \ge \lambda_M$ and eigenvectors $\{e_j\}_{j=1}^M$.
Further, let $A$ be the smallest positive integer, which satisfies the following conditions.
\bitem
\item[(i)] $\lambda_1 +2 \le A$.
\item[(ii)] $AM = N_0m$ for some $N_0 \in \NN$.
\item[(iii)] $A \le \lambda_1 + N_0 - (N+3)$.
\eitem
Then there exists a fusion frame $\{\cV_i\}_{i=1}^{N_0-N}$ for $\bR^M$ with dim $\cV_i =m$ for all $i \in \{1,\ldots,N_0-N\}$
so that
\[
\{\cW_i\}_{i=1}^N \cup \{\cV_i\}_{i=1}^{N_0-N}
\]
is an $A$-tight fusion frame.
\end{theorem}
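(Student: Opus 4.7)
The strategy is to construct $\{\cV_i\}_{i=1}^{N_0-N}$ so that its fusion frame operator equals $S' := AI - S$. If such a construction succeeds, the union $\{\cW_i\}_{i=1}^N \cup \{\cV_i\}_{i=1}^{N_0-N}$ has fusion frame operator $S+S' = AI$, hence is $A$-tight. Condition (i) of the hypothesis guarantees that $A > \lambda_j$ for every $j$, so $S'$ is a positive operator sharing the eigenbasis $\{e_j\}_{j=1}^M$ of $S$ and having eigenvalues $\{A - \lambda_j\}_{j=1}^M$.

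To produce such a $\{\cV_i\}$ I would invoke Theorem \ref{theo:givenFFOperator1}, feeding the algorithm {\sc (FFCRE)} the eigenvalues $\mu'_1 \ge \cdots \ge \mu'_M$ obtained by sorting $\{A - \lambda_j\}$ in decreasing order, together with subspace dimension $m$ and number of subspaces $N' := N_0 - N$. Running {\sc (FFCRE)} inside the orthonormal basis $\{e_j\}$ (rather than the standard basis of $\bR^M$) and reading the output subspaces in that basis produces $\cV_i \subseteq \bR^M$ whose fusion frame operator is exactly $S'$, as the algorithm is built so that its output has fusion frame operator $\mathrm{diag}(\mu'_1,\ldots,\mu'_M)$ in the basis it is run in. Of the hypotheses of Theorem \ref{theo:givenFFOperator1}, the factorization {\sc (Fac)}, $\sum_j \mu'_j = N' m$, follows from $\tr(S) = \sum_i \dim \cW_i = Nm$ and our condition (ii), since $\sum_j(A-\lambda_j) = AM - Nm = (N_0-N)m$; its hypothesis (i), $\mu'_M = A - \lambda_1 \ge 2$, is exactly our condition (i).

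The heart of the proof is therefore verifying hypothesis (ii) of Theorem \ref{theo:givenFFOperator1}, namely $\lfloor \mu'_{j_0} \rfloor \le N' - 3$ for the first (equivalently, largest) non-integer $\mu'_{j_0}$. I expect this to be the main obstacle, since $\mu'_{j_0} = A - \lambda^*$, where $\lambda^*$ is the \emph{smallest} non-integer $\lambda_j$, whereas our condition (iii) is phrased in terms of $\lambda_1$, and $\lambda^*$ can in principle be strictly smaller than $\lambda_1$. The argument will exploit the minimality of $A$: conditions (i) and (ii) pin $A$ close to $\lambda_1+2$ (within a quantity controlled by $m/\gcd(M,m)$), so that the upper estimate $A - \lambda_1 \le N_0 - N - 3$ supplied by (iii), combined with a short case analysis on whether $\lfloor \lambda^* \rfloor$ and $\lfloor \lambda_1 \rfloor$ coincide, yields $A - \lceil \lambda^* \rceil \le N_0 - N - 3$, i.e.\ $\lfloor \mu'_{j_0}\rfloor \le N' - 3$. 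Once this last inequality is in place, Theorem \ref{theo:givenFFOperator1} delivers the required $\{\cV_i\}_{i=1}^{N_0-N}$, and the proof is concluded.
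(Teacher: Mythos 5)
Your strategy coincides exactly with the paper's: set $\mu_j := A-\lambda_j$, note that $\sum_{j}\mu_j = AM-Nm=(N_0-N)m$ so that {\sc (Fac)} holds with $N_1:=N_0-N$ subspaces of dimension $m$, observe that the smallest of the $\mu_j$ is $A-\lambda_1\ge 2$ by (i), run {\sc (FFCRE)} in the eigenbasis $\{e_j\}$ via Theorem \ref{theo:givenFFOperator1}, and adjoin the resulting subspaces so that the combined fusion frame operator is $S+(AI-S)=AI$. All of that is correct and is precisely what the paper does.

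The difficulty is the step you yourself defer, namely hypothesis (ii) of Theorem \ref{theo:givenFFOperator1} for the decreasingly sorted sequence $\{\mu_j\}$, and the repair you sketch cannot succeed. As you observe, the first non-integer term of that sorted sequence is $A-\lambda^*$, where $\lambda^*$ is the \emph{smallest} non-integer eigenvalue of $S$, so what is needed is $\lfloor A-\lambda^*\rfloor\le N_1-3$, whereas (iii) only controls $A-\lambda_1$. The shortfall is essentially $\lambda_1-\lambda^*$, a property of the spread of the spectrum of $S$ that is not constrained by (i)--(iii) at all; minimality of $A$ does not help, since it merely pins $A$ near $\lambda_1+2$ and says nothing about $\lambda_1-\lambda^*$. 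The arithmetic already fails for, e.g., $M=4$, $m=2$ and the eigenvalue list $(2,2,1.5,0.5)$ (so $N=3$): the smallest admissible $A$ is $4$, giving $N_1=5$ and sorted $\mu$'s $(3.5,2.5,2,2)$, for which (iii) holds with equality and yet $\lfloor 3.5\rfloor=3>N_1-3=2$. So no ``short case analysis'' starting from (i)--(iii) can deliver $\lfloor\mu'_{j_0}\rfloor\le N_1-3$. It is worth saying that you have put your finger on a genuine weak point: the paper's own proof asserts $\mu_1\ge\cdots\ge\mu_M$ (the ordering is in fact reversed) and then verifies hypothesis (ii) only for $\mu_1=A-\lambda_1$, i.e., for the smallest $\mu$ rather than for the largest non-integer one, which is exactly the gap you identified. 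But identifying the obstacle is not the same as overcoming it; as written, your argument is incomplete at precisely the step on which the theorem rests.
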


\begin{proof}
The first task is to check whether such a positive integer $A$ exists at all. We use
the ansatz $A = nm$ for some $n \in \NN$. This immediately satisfies (ii). Now choose
$n$ \ap{as} the smallest positive \ap{integer} still satisfying
\[
\lambda_1 + 2 \le A.
\]
Thus (i) and (ii) are fulfilled (and they will still be fulfilled for all larger $n\in \NN$.)
For inequality (iii), we require
\[
nm \le \lambda_1 + nM - (N+3),
\]
which we can reformulate as
\[
\frac{m}{M} \le \frac{\lambda_1}{Mn} + 1 - \frac{N+3}{Mn}.
\]
Since $\frac{m}{M} < 1$ by assumption, $n$ can be chosen large enough for this inequality
to be satisfied.

Next, we set
\[
\mu_j = A- \lambda_j \quad \mbox{for all } j=1,\ldots, M.
\]
In particular, we have $\mu_1 \ge \ldots \ge \mu_M$. We claim that the hypotheses of Theorem
\ref{theo:givenFFOperator1} are satisfied by the sequence $\{\mu_j\}_{j=1}^M$. For the proof,
we refer to the assumption of the present theorem as (i'), (ii'), and (iii').\\
(i). By (i'),
\[
\mu_1 = A-\lambda_1 \ge 2.
\]
Letting $N_1 = N_0-N$,
\[
\sum_{j=1}^{M} \mu_j
= \sum_{j=1}^M (A-\lambda_j)
= AM -\sum_{j=1}^M \lambda_j
= AM - Nm
= N_0m-Nm
= N_1m.
\]
(ii). By (iii'),
\[
\mu_1 = A-\lambda_1 \le (N_0-N)-3 = N_1-3.
\]

From Theorem \ref{theo:givenFFOperator1} it follows that there
exists a fusion frame $\{\cV_i\}_{i=1}^{N_1}$ for $\bR^M$ whose
fusion frame operator $S_1$, say, has eigenvectors $\{e_j\}_{j=1}^M$
and respective eigenvalues $\{\mu_j\}_{j=1}^M$. The fusion frame
operator for $\{\cW_i\}_{i=1}^N \cup \{\cV_i\}_{i=1}^{N_1}$ is
$S+S_1$, which then possesses as eigenvectors the sequence
$\{e_j\}_{j=1}^M$ with associated eigenvalues
\[
\lambda_j + \mu_j = \lambda_j +(A-\lambda_j) =A.
\]
Hence $\{\cW_i\}_{i=1}^N \cup \{\cV_i\}_{i=1}^{N_0-N}$ constitutes an $A$-tight fusion frame.
\end{proof}

The number of $m$-dimensional subspaces added in Theorem
\ref{theo:givenFFOperator2} to force a fusion frame to become tight
is in fact the \ap{smallest} number that can be added in general.
For this, let $\{\cW_i\}_{i=1}^N$ be a fusion frame for $\bR^M$ with
fusion frame operator $S$ having eigenvalues
$\{\lambda_j\}_{j=1}^M$. Suppose $\{\cV_i\}_{i=1}^{N_1}$ is any
family of $m$-dimensional subspaces with fusion frame operator
$S_1$, say, and so that the union of these two families is an
$A$-tight fusion frame for $\bR^M$. Thus
\[
S+S_1 = AI,
\]
which implies that the eigenvalues $\{\mu_j\}_{j=1}^M$ of $S_1$ satisfy
\[
\mu_j = A-\lambda_j \quad \mbox{for all } j=1,\ldots,M,
\]
and
\[
\sum_{j=1}^M \mu_j = \sum_{j=1}^M (A-\lambda_j) = AM-Nm = N_1m.
\]
In particular,
\[
AM = (N_1-N)m = N_0m.
\]
Thus, we have examples to show that -- in general -- fusion frames with the above properties of $S_1$ cannot be constructed unless the hypotheses of Theorem \ref{theo:givenFFOperator2} are satisfied. This shows that the smallest constant satisfying this theorem is in general the smallest number of subspaces we can add to obtain a tight fusion frame.

\subsection{Extensions and Related Problems}\label{subsec:extensions}

Finally, we would like to discuss several extensions and related problems.

{\em \ap{Weights.}} The handling of weights is \ap{particularly
delicate. When} turning a frame $\{f_i\}_{i=1}^N$ into the fusion
frame $\{(\spann\{f_i\},\|f_i\|)\}_{i=1}^N$ consisting of
1-dimensional subspaces and having the same (fusion) frame operator
as well as the same (fusion) frame bounds \cite[Prop. 2.14]{CKL08},
we notice that the subspaces are generated by the frame vectors and
the weights \ap{have to be chosen equal to} the norms of the frame vectors. Thus
choosing weights is in a sense comparable to choosing the lengths of
frame vectors. However \ap{the design is more} delicate due to
\ap{the necessary} compensation of the dimensions of the subspaces.
\ap{Generalizing,} for instance, Theorem \ref{theo:givenFFOperator1}
to weighted fusion frames \ap{requires careful} handling and \ap{a
thorough understanding of the interplay between subspace dimensions
and weights. This is currently under investigation.}

{\em \ap{Chordal Distances.}} It was shown in \cite{KPCL08} that
maximal resilience of fusion frames to noise and erasures \ap{is}
closely related to the chordal distances between \ap{pairs
of} subspaces forming \ap{the} fusion frame. Hence it would be
desirable to be able to control the set of chordal distances in
construction procedures for fusion frames. The results in Section
\ref{sec:generalconstruction} already allow this control. However,
for instance, for Theorem \ref{theo:givenFFOperator1} this control
is \ap{more difficult.}

\ap{{\em Sparsity.} A fusion frame (also in the special case of a
frame) with a desired fusion frame operator is often times designed
to provide an optimal tool for data processing with specific
performance metrics. The data processing ultimately needs to be
performed with a DSP board and therefore it is desired to reduce the
computational complexity (number of additions and multiplications
required) as much as possible. This motivates the design of fusion
frames, with fusion frame operators, which have sparsity properties. In other
words, it is desired for the vectors of a frame as well as the
generating vectors of the subspaces of a fusion frame to be sparse.
The sparsity allows} for fast vector-vector multiplications. In the
construction \ap{presented} in Section
\ref{sec:fusionframeoperator}, this principle is deployed by only
using sparse linear combinations of unit vectors. \ap{We conjecture
that this construction enjoys maximal sparsity. However, we do not
have a rigorous proof.} In general, generating frames and fusion
frames which allow for fast processing through additional inner
structural properties such as sparsity \ap{is becoming} more and
more indispensable.

{\em \ap{Equivalence Classes.}} Our results normally produce one
fusion frame satisfying a desired property. However, from a
scholarly point of view, it would be desirable to be able to
generate each such fusion frame in the sense of the whole
``equivalence class'' of fusion frames satisfying a special
property. This is beyond our reach at this point, since even the
following apparently simple problem is still unsolved: Construct one
Parseval frame in each equivalence class choosing unitary
equivalence as the relation.


\end{document}